\newcommand{\pl}[1]{\foreignlanguage{polish}{#1}}
\theoremstyle{plain}
\newtheorem{thm}[equation]{Theorem}
\newtheorem{cor}[equation]{Corollary}
\newtheorem{prop}[equation]{Proposition}
\newtheorem{lem}[equation]{Lemma}
\numberwithin{equation}{section}
\theoremstyle{remark}
\newtheorem{rem}[equation]{Remark}
\newcommand{\RR}{\mathbb{R}}
\newcommand{\ZZ}{\mathbb{Z}}
\newcommand{\CC}{\mathbb{C}}
\newcommand{\NN}{\mathbb{N}}
\newcommand{\X}{X}
\newcommand{\e}{\varepsilon}
\newcommand{\ind}[1]{{\mathds{1}_{{#1}}}}
\newcommand{\ex}[1]{\boldsymbol{e}\left( #1 \right)}
\def\R{{\mathbb{R}}}
\def\N{{\mathbb{N}}}
\def\Z{{\mathbb{Z}}}
\newcommand{\floor}[1]{{\lfloor {#1} \rfloor}}
\newcommand{\8}{\infty}
\title[Waring problem with a multiplicative error term: dimension-free estimates]
{On the solution of Waring problem with a multiplicative error term: dimension-free estimates}
\author{Jarosław Mirek}
\address[Jarosław Mirek]{Instytut Matematyczny\\
	Uniwersytet \pl{Wroc{\lll}awski}\\
	Plac Grun\-waldzki 2\\
	50-384 \pl{Wroc{\lll}aw}\\
	Poland}
\email{jd.mirek@gmail.com}
\author{Wojciech S{\l}omian}
\address[Wojciech S{\l}omian]{
	  Faculty of Pure and Applied Mathematics, 
	  Wroc\l{}aw University of Science and Technology\\
	  Wyb{.} Wyspia\'nskiego 27,
	  50-370 Wroc\l{}aw, Poland}
    \email{wojciech.slomian@pwr.edu.pl}
\author{B{\l}a{\.z}ej Wr{\'o}bel}
\address[B{\l}a{\.z}ej Wr{\'o}bel]{Instytut Matematyczny\\
	Uniwersytet \pl{Wroc{\lll}awski}\\
	Plac Grun\-waldzki 2\\
	50-384 \pl{Wroc{\lll}aw}\\
	Poland}
\email{blazej.wrobel@math.uni.wroc.pl}
\subjclass[2020]{11P05, 11P55}
\keywords{Waring’s problem, Hardy--Littlewood circle method, dimension-free estimates}
\thanks{Jarosław Mirek, Wojciech S{\l}omian, and B{\l}a{\.z}ej Wr{\'o}bel were supported by the National
Science Centre, Poland, grant Opus 2018/31/B/ST1/00204. The authors thank the reviewers for their careful reading of the manuscript and helpful suggestions.}
\begin{document}
 
\selectlanguage{english}

\begin{abstract}
We give a relation between the radius and the dimension in which the asymptotic formula in the Waring problem holds in a multiplicative and dimension-free fashion.
\end{abstract}

\maketitle

\section{Introduction}
\label{sec:1}

Let $\N_0$ denote the set of nonnegative natural numbers $\N_0=\{0,1,2,\ldots,\}.$
In 1770 Waring made the statement that for each $k\in \N$ there exist $d\in \N$ such that every natural number $N$ can be written as
\begin{equation}\label{wareq}
    N=n_1^k+n_2^k+\ldots+n_d^k,\quad\text{for}\,\, n_i\in\NN_0.
\end{equation}
In the same year Lagrange gave a positive answer to the case $k=4$ of Waring's problem.
The first proof which concerns every $k\in\N$ was given by Hilbert \cite{Hil} in 1909. Hilbert's proof relied on, what is today called, Hilbert's identity which follows from some algebraic properties of polynomials. In the 1920' Hardy and Ramanujan \cite{HR} and Hardy and Littlewood \cite{HL1} began the study of questions related to Waring's problem from a more quantitative perspective. Namely, for $N\in \N$ by $r_k(N)$ we denote the number of $d$-tuples $(n_1, n_2, \ldots, n_d)\in\N^d$ which solve the equation \eqref{wareq}. In \cite{HR,HL1} the authors were interested in the asymptotic size of $r_k(N)$ as $N$ approaches infinity. 

The circle method was pioneered by Hardy and Littlewood in \cite{HL1} to prove  that for $k\ge2$,  $d\ge2^k+1$ 
we have
\begin{align}\label{asympto1}
	r_k(N)=\mathfrak{S}(N)\frac{\Gamma\left(1+\frac{1}{k}\right)^d}{\Gamma\left(\frac{d}{k}\right)}
	N^{d/k-1}+\mathcal{O}\big(N^{d/k-1-\delta}\big),
\end{align}
for some $\delta>0.$ Here $\Gamma$ is the Gamma function, $\mathfrak{S}(N)$ is the \textit{singular series} given by
\begin{equation}
	\label{eq:sise}
	\mathfrak{S}(N)=	\mathfrak{S}_d^k(N):=\sum_{q=1}^{\8}\sum_{\genfrac{}{}{0pt}{}{a=1}{(a, q)=1}}^qG(a/q)^d\ex{-Na/q}
\end{equation}
with $\ex{z}=\exp(2\pi {\boldsymbol{i}} z),$ and $G(a/q)$ is the Gaussian sum
\begin{align*}
	G_k(a/q)=G(a/q):=\frac{1}{q}\sum_{r=1}^{q}\ex{\frac{a}{q}r^k}.
\end{align*}
The asymptotic formula \eqref{asympto1} holds for 'large enough' $N,$ depending on $k$ and $d.$ 

The purpose of our note is to establish for a fixed $k\in \NN$ a range of $N$ (depending on the dimension $d$) with a precise meaning for 'large enough' for which the asymptotic formula holds.
\begin{thm}\label{thm:main}
For each $k\in \NN$ with $k\ge 2$ there exists a constant $C_k>0$ such that, for $d\ge 2^{k+1}$ and $N\ge C_k d^{20d}$, we have   
	\begin{equation}
		\label{eq:thm:main}
			r_k(N)=\mathfrak{S}(N)\frac{\Gamma\left(1+\frac{1}{k}\right)^d}{\Gamma\left(\frac{d}{k}\right)}N^{d/k-1}(1 + o(1)),
	\end{equation}
	where the implicit constants in the symbol $o(1)$ are independent of the dimension $d.$ 
\end{thm}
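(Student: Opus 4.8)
The plan is to run the Hardy--Littlewood circle method, but this time tracking the dependence on $d$ in every estimate so that the final range of $N$ comes out as $N\gtrsim_k d^{20d}$. Writing $f(\alpha)=\sum_{n\le P}\ex{\alpha n^k}$ with $P=\lfloor N^{1/k}\rfloor$, we have $r_k(N)=\int_0^1 f(\alpha)^d \ex{-N\alpha}\,d\alpha$, and we split $[0,1)$ into the major arcs $\mathfrak{M}$ (a neighbourhood of rationals $a/q$ with $q$ small relative to a parameter $Q$) and the minor arcs $\mathfrak{m}$. On the major arcs one replaces $f$ by its expected main term: the singular integral times a Gauss sum, producing $\mathfrak{S}(N)\,\frac{\Gamma(1+1/k)^d}{\Gamma(d/k)}N^{d/k-1}$ plus errors. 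The crucial point is that the singular series $\mathfrak{S}(N)$ is bounded above and below by absolute constants once $d\ge 2^{k+1}$ (indeed $\mathfrak{S}(N)=\prod_p \chi_p(N)$ with $\chi_p(N)\to 1$ rapidly in $p$ uniformly in $d$), and that the truncated singular series and truncated singular integral approximate the full ones with a relative error $o(1)$ independent of $d$ — this is where the size of $Q$, hence of $N$, is dictated, and one must be careful that the number of major arcs, $\sum_{q\le Q}\phi(q)\le Q^2$, does not ruin the gain.

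Next I would handle the minor arcs, where the classical bound is Weyl's inequality: for $\alpha$ with $|\alpha-a/q|\le q^{-2}$ and $P\le q\le P^{k-1}$ one has $|f(\alpha)|\ll P^{1+\e}(q^{-1}+P^{-1}+qP^{-k})^{1/K}$ with $K=2^{k-1}$. Raising to the power $d$ and integrating, the total minor-arc contribution is at most $\sup_{\mathfrak{m}}|f(\alpha)|^{d-2^k}\int_0^1|f(\alpha)|^{2^k}\,d\alpha$, and the Hua-type mean value bound gives $\int_0^1|f(\alpha)|^{2^k}\,d\alpha\ll P^{2^k-k+\e}$. So the minor arcs are $\ll P^{d-k}\cdot (\sup_{\mathfrak m}|f|/P)^{d-2^k}\cdot P^{\e}$, which must be $o(1)$ times the main term $\asymp P^{d-k}$. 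Since $d-2^k\ge 2^k$ in our regime, it suffices that $(\sup_{\mathfrak m}|f|/P)^{2^k}P^{\e}=o(1)$, i.e. that Weyl's inequality saves a factor like $P^{-c}$ with $c>0$; choosing the major-arc parameter $Q$ to be a suitable power of $P$ (e.g.\ $Q=P^{\eta}$ for small fixed $\eta=\eta_k$) makes this work, while keeping $Q$ large enough for the major-arc analysis. Tracking the $\e$'s and the $d$-dependent constants here — the implied constants in Weyl and Hua are $k$-dependent but not $d$-dependent, and the only $d$-dependence enters through $(\cdot)^{d}$ — is what produces the exponent $20d$.

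The main obstacle, and the step requiring the most care, is the major-arc estimate, specifically showing that truncation errors are $o(1)$ \emph{uniformly in $d$}. Concretely: (i) replacing $f(a/q+\beta)$ by $q^{-1}S(a,q)\,v(\beta)$ on each arc costs an error that, after raising to the $d$-th power and summing over $\le Q^2$ arcs, is acceptable only if $N$ exceeds a power of $Q$ that itself grows like $d^{Cd}$; (ii) the quantity $\Gamma(d/k)^{-1}N^{d/k-1}$ in the denominator is extremely small, so one must verify that the main term genuinely dominates, which forces a lower bound $N\ge C_k d^{20d}$ via Stirling-type estimates on $\Gamma(d/k)$ and $\Gamma(1+1/k)^d$; and (iii) the singular series must be bounded below \emph{independently of $d$} — one shows $\chi_p(N)\ge 1-c_p$ with $\sum_p c_p<\infty$ and the convergence uniform in $d$, using $|G(a/q)|\ll q^{-1/k}$ and the standard $p$-adic density argument, so that $\prod_p\chi_p(N)$ stays bounded away from $0$. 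Once these three uniform bounds are in place, the theorem follows by collecting the main term from the major arcs and absorbing all remaining contributions into the $o(1)$. The bookkeeping of constants — making sure every implied constant is either absolute or depends only on $k$, never on $d$ — is the real content of the proof; all the analytic inputs (Weyl, Hua, Gauss sum bounds, Stirling) are classical.
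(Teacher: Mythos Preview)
Your proposal follows essentially the same route as the paper: circle method, minor arcs via Weyl's inequality plus a Hua-type moment, and a major-arc analysis in three steps---(i) replace $f$ by $G(a/q)v(\cdot)$, (ii) extend the range of integration, (iii) complete the singular series. You correctly identify step~(i) as the bottleneck forcing $N\gtrsim_k d^{Cd}$. A minor difference is that the paper uses Bourgain's sharpened Weyl inequality (saving exponent $\sigma(k)=1/(k(k-1))$) and his $k(k+1)$-th moment bound in place of the classical versions with exponent $2^{1-k}$ and a $2^k$-th moment; both choices work once $d\ge 2^{k+1}$.

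There is one genuine slip. You assert that $\mathfrak{S}(N)$ is bounded below by an \emph{absolute} constant, via $\chi_p(N)\ge 1-c_p$ with $\sum_p c_p<\infty$ uniformly in $d$. That argument is valid for primes $p>p_0(k)$ (use $|G(a/q)|\le 1$ to freeze the exponent at $2^k+1$, so the tail estimate is indeed uniform in $d$), but for the finitely many small primes the only available lower bound is the $p$-adic density estimate $\chi_p(N)\ge p^{\gamma_k(1-d)}$, which is $d$-dependent. The paper accordingly proves only $\mathfrak{S}(N)\ge a_k^d$ (Proposition~\ref{prop:singest}), and this weaker bound suffices: the factor $a_k^{-d}$ is absorbed into the ambient $C_k^d$ constants that are already present in every error term. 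Relatedly, your minor-arc claim that ``it suffices that $(\sup_{\mathfrak m}|f|/P)^{2^k}P^{\e}=o(1)$'' understates what is needed: since the main term carries $\Gamma(d/k)^{-1}\simeq_k^d d^{-d/k}$, one must retain all $d-2^k$ factors of the Weyl saving to obtain a gain of order $P^{-cd}$, which beats $d^{d/k}$ once $N\ge C_k d^{c'k^2}$---a polynomial threshold. The $d^{20d}$ threshold comes entirely from the major-arc step~(i), as you say in your point~(i).
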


In other words Theorem \ref{thm:main} states that for $N\ge C_k d^{20d}$ one has a dimension-free asymptotic formula for $r_k(N)$ in the Waring problem \eqref{wareq} with a multiplicative error term. Apart from the case $k=2$ we have not found in the existing literature anything about uniformities with respect to radii and dimensions in \eqref{eq:thm:main}. The threshold $N\ge C_k d^{20d}$ is rather weak and surely far from being optimal in any sense. In particular, in the case $k=2$, Mirek, Szarek, and the third author \cite{miszwr1} have recently obtained a much better range of $N$ for a closely related Waring problem. Namely, let $\tilde{r}_2(N)$ denote the number of $d$-tuples of integer numbers $(n_1, n_2, \ldots, n_d)\in\Z^d$ which solve the equation
$n_1^2+n_2^2+\ldots+n_d^2=N.$ 
It was proved in \cite[Theorem 3.1, eq. 3.17]{miszwr1} that for $d\geq 16$ one has
\begin{equation*}
	\tilde{r}_2(N)=\mathfrak{S}(N)\frac{\pi^{d/2}}{\Gamma(d/2)}N^{d/2-1}\big(1+o(1)\big),
\end{equation*}
whenever $N> C d^3.$ The proof from \cite{miszwr1} can be repeated almost verbatim to give a similar formula in our case  
\begin{equation*}
r_2(N)=\mathfrak{S}(N)\frac{\pi^{d/2}}{2^d\Gamma(d/2)}N^{d/2-1}\big(1+o(1)\big)=\mathfrak{S}(N)\frac{\Gamma\left(3/2\right)^d}{\Gamma\left(d/2\right)}N^{d/2-1}(1 + o(1)).
\end{equation*}
The error term $o(1)$ in the above formula goes to zero as $N\to \infty$ in a dimension-free fashion in the range $N>C d^3.$
\begin{rem}
    When $k$-spheres are replaced by $k$-balls one can obtain a huge improvement over the threshold $N\ge C_k d^{20d}$. Indeed, let 
\begin{equation*}
    B_k(N):=\Big\{x\in \RR^d\colon \sum_{j=1}^d |x_j|^k\leq N\Big\}
\end{equation*}
be the closed unit ball of radius $N^{1/k}$ with respect to the $k$-norm and denote by $\tilde{b}_k(N)$ the number of  $d$-tuples of integer numbers $(n_1, n_2, \ldots, n_d)\in\Z^d$ which belong to $B_k(N).$
Then, from \cite[Lemmas 3.1 and 3.3]{KMPW} (general $k\ge 2$) and \cite[Lemmas 3.1 and 3.3]{BMSW4} ($k=2$) it follows that
\begin{equation}
\label{eq:ball}
c_k |B_k(N)|\leq\tilde{b}_k(N)\leq C_k |B_k(N)|,
\end{equation}
whenever $N\ge T_k d^k,$ with $T_k>0$ being a constant depending only on $k$. The symbol $|B_k(N)|$ in \eqref{eq:ball} denotes the $d$-dimensional Lebesgue measure of $B_k(N)$ while $c_k$ and $C_k$  are constants independent of the dimension $d.$ Note that \eqref{eq:thm:main} together with Proposition \ref{prop:singest} imply an estimate in the spirit of \eqref{eq:ball} only within the regime $N\ge C_k d^{20d}.$ A property that is crucial in establishing \eqref{eq:ball} is the nestedness of the $B_k(N)$ balls, which is obviously not available in the present context. 
\end{rem}

The methods employed in the proof of Theorem~\ref{thm:main} are the standard application of the Hardy--Littlewood circle method. Our contribution consists of carefully checking what is the smallest $N$ (in terms of $d$) for which the classical method works. The analysis is naturally split into the consideration of major arcs and minor arcs. In fact for a most part of the proof a much weaker condition $N> C_k d^{ck^2}$ is sufficient. The only part where we really need to restrict to $N>C_k d^{20d}$ lies in the major arc part. More precisely, the replacement of the exponential sum by an exponential integral is problematic in Lemma~\ref{lem:M1}, see Remark~\ref{rem:problem}. In \cite{miszwr1} this problem does not appear due to the use of different methods based on Poisson summation formulas for theta functions. It seems crucial there that $k=2.$
\section{Notation}
We now set up notation and terminology that will be used throughout the paper. The parameter $k\in\NN$ is fixed throughout the paper and will always mean the power in Waring's problem \eqref{wareq}. Therefore whenever we say that a constant is universal we mean that it may only depend on $k.$
\subsection{Basic notation} The sets $\ZZ, \RR$ and $\CC$ have the standard meaning.
\begin{enumerate}[label*={\arabic*}.]
    \item We denote $\NN:=\{1,2,\ldots\}$ and by $\mathbb{P}$ we mean the set of all prime numbers.
    \item For two natural numbers $a,q$  the greatest common divisor of $a$ and $q$ is denoted by $(a,q)$.
    \item For any $x\in\RR$ we will use the floor function
    \[
        \floor{x}: = \max\{ n \in \ZZ : n \le x \}.
    \]
    Moreover, for $N\in \NN$ we denote
    \[
    \X_N:=\floor{N^{1/k}}.
    \]
    \item We will use the convention that $\ex{z}:=e^{2\pi {\boldsymbol{i}} z}$ for every $z\in\CC$, where $\boldsymbol{i}^2=-1$. For $X\in \NN$ and $\xi \in \RR$ we denote 
    \[
        f_{X}(\xi):=\sum_{n=1}^{X}\ex{\xi n^k}.
    \]
    \item  For any $x>0$ we will use the Gamma function given by
    \[
        \Gamma(x):=\int_0^\infty{t^{x-1}e^{-t}{\rm d}t}.
    \]
    \item For $k\ge 2,$ $k\in \NN$ we also set
    \[
    \sigma(k):=\frac{1}{k(k-1)}.
    \]
\end{enumerate}
\subsection{Asymptotic notation and magnitudes}
\begin{enumerate}[label*={\arabic*}.]
\item For two nonnegative quantities
$A, B$ we write $A \lesssim_{\delta} B$ ($A \gtrsim_{\delta} B$) if
there is an absolute constant $C_{\delta}>0$ (which possibly depends
on $\delta>0$) such that $A\le C_{\delta}B$ ($A\ge C_{\delta}B$).  We
will write $A \simeq_{\delta} B$ when $A \lesssim_{\delta} B$ and
$A\gtrsim_{\delta} B$ hold simultaneously. We will omit the subscript
$\delta$ if it is irrelevant.

\item  For two nonnegative quantities $A, B$ we will also use the convention that $A \lesssim^{d} B$ ($A \gtrsim^{d} B$) if
there is an absolute constant $C>0$ such that $A\leq C^dB$ ($A\geq C^d B$). We
will write $A \simeq^{d} B$ when $A \lesssim^{d} B$ and
$A\gtrsim^{d} B$ hold simultaneously. Sometimes we abuse notation and write $A \lesssim_{\delta}^d B$ ($A \gtrsim_{\delta}^d B$) if
there is an absolute constant $C_{\delta}>0$ (which possibly depends
on $\delta>0$) such that $A\le C_{\delta}^dB$ ($A\ge C_{\delta}^dB$). In the same manner we will write $A \simeq_{\delta}^d B$ when $A \lesssim_{\delta}^d B$ and
$A\gtrsim_{\delta}^d B$ hold simultaneously.

\item For a function $f:X\to \CC$ and positive-valued
function $g:X\to (0, \infty)$, write $f = \mathcal{O}(g)$ if there exists a
constant $C>0$ such that $|f(x)| \le C g(x)$ for all $x\in X$. We will
also write $f = \mathcal{O}_{\delta}(g)$ if the implicit constant depends on
$\delta$.  For two functions $f, g:X\to \CC$ such that $g(x)\neq0$ for
all $x\in X$ we write $f = o(g)$ if $\lim_{x\to\infty}f(x)/g(x)=0$.
\end{enumerate}

\section{The circle method of Hardy and Littlewood}
The circle method was developed by Hardy and Littlewood in order to handle additive problems in the number theory. It was significantly improved by Vinogradov \cite{Vino} in 1928 and was used by him to obtain the asymptotic formula in the Goldbach ternary representation problem. The main innovation introduced by Vinogradov was the usage of the exponential sums instead of infinite series which greatly simplified the method. Usually the application of the circle method is followed by using estimates for the exponential sums like Weyl's or Vinogradov's inequality. Over the years the circle method has become a widely used tool in  analytic number theory as well as in harmonic analysis, see \cite{Woo2,Woo0,B3,miszwr1} and references given there. We recommend an excellent treatise on the subject due to Vaughan \cite{Vo}. For more general view on the analytic number theory we refer to the monograph by Iwaniec and Kowalski \cite{IK}.

Now, let us show how to derive the asymptotic formula \eqref{eq:thm:main} with the aid of the circle method. Let $k\ge2$ be a fixed integer and denote
\[
S_N=\{(n_1, n_2, \ldots, n_d)\in\N^d: \ n_1^k+n_2^k+\ldots+n_d^k=N\}.
\]
Observe, that for $\X_N=\lfloor N^{1/k}\rfloor$ one can write
\begin{align}
	\nonumber r_k(N)&=\sum_{(n_1, n_2, \ldots, n_d)\in\N^d}\ind{S_N}(n_1, n_2, \ldots, n_d)\\
	\nonumber &=\sum_{n_1=1}^{\X}\ldots\sum_{n_d=1}^{\X}
	\int_{0}^{1}\ex{ \xi(n_1^k+n_2^k+\ldots+n_d^k)}e^{-2\pi i \xi N}{\rm d}\xi\\
	\label{asymp1}  &=\int_{0}^{1}\big(f_{\X_N}(\xi) \big)^d \ex{- \xi N}{\rm d}\xi.
\end{align}
Therefore, our task is reduced to finding the asymptotics for the integral \eqref{asymp1}. We follow the approach taken by Hardy and Littlewood and decompose the unit interval $[0,1]$ into two disjoint sets, called the major arcs $\mathfrak{M}_{X_N}$ and the minor arcs $\mathfrak{m}_{X_N}$, and evaluate the integral over both sets separately. The major arcs consist of such real numbers $\xi\in[0,1]$ which can be "well approximated" by rational numbers $a/q$ with $(a,q)=1$. For $\xi\in\mathfrak{M}_{X_N}$ we are able to show that
\begin{equation*}
	f_{\X_N}(\xi)\approx G(a/q)\int_0^{N^{1/k}}\ex{ (\xi-a/q)x^k}{\rm d}x.
\end{equation*}
On the other hand, on the minor arcs, which are the complement of the major arcs, the integral \eqref{asymp1} is negligible.

Following this idea, for fixed $N\in\N$ and $\alpha\in(0,1/4)$ we
define the family of the \textit{major arcs}
\[
\mathfrak{M}_{\X_N}:=\bigcup_{1\le q\le \X_N^{\alpha}}\bigcup_{\genfrac{}{}{0pt}{}{a=1}{(a, q)=1}}^q\mathfrak{M}_{\X_N}(a/q),
\]
where
\[
\mathfrak{M}_{\X_N}(a/q):=\big\{\xi\in[0, 1]: |\xi-a/q|\le \X_N^{-k+\alpha}\big\}\quad\text{with}\quad q\leq X_N^\alpha.
\]
We see that if $a/q$ varies over the rational fractions with small
denominators ($1\le q\le \X_N^{\alpha}$ and $(a, q)=1$) then
$\mathfrak{M}_{\X_N}(a/q)$ are disjoint.

The \textit{minor arcs} will be the set
$$\mathfrak{m}_{\X_N}=[0, 1]\setminus\mathfrak{M}_{\X_N}.$$

In view of this partition we obtain that
\begin{equation}
		\label{asymp2}
	\begin{split}
 r_k(N)&=\int_{\mathfrak{M}_{\X_N}}\big(f_{\X_N}(\xi)\big)^d \ex{-\xi N }{\rm d}\xi+\int_{\mathfrak{m}_{\X_N}}\big(f_{\X_N}(\xi)\big)^d \ex{-\xi N }{\rm d}\xi\\
	&:=M_k(N)+m_k(N).
	\end{split}
\end{equation}
Now our task is to estimate the quantities $M_k(N)$ and $m_k(N)$.
\subsection{Minor arc estimate}
We start with estimating the term $m_k(N)$ related to the minor arcs. 
\begin{prop}
	\label{pro:marc}
	For each $k\ge 2$ there exists a constant $C_{k,\alpha}>0$ such that, for $d\ge 2^{k+1}$ and~$N\ge C_{k,\alpha} d^{5k^2/\alpha}$ we have   
	\begin{equation}
		\label{eq:pro:minor}
		m_k(N)=\mathfrak{S}(N)\frac{\Gamma\left(1+\frac{1}{k}\right)^d}{\Gamma\left(\frac{d}{k}\right)}N^{d/k-1} \cdot o(1),
	\end{equation}
	where the implicit constants in the symbol $o(1)$ are independent of the dimension $d.$
\end{prop}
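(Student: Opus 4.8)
The plan is to bound $m_k(N)$ by a straightforward application of Weyl's inequality on the minor arcs, and then to show that the resulting bound is of smaller order than $\mathfrak{S}(N)\Gamma(1+1/k)^d\Gamma(d/k)^{-1}N^{d/k-1}$, \emph{uniformly in $d$}. First I would recall the standard Weyl estimate: if $\xi\in\mathfrak{m}_X$, then by Dirichlet's theorem $\xi$ has a rational approximation $a/q$ with $q\le X^{k-\alpha}$ and $|\xi-a/q|\le q^{-1}X^{-k+\alpha}$, and since $\xi\notin\mathfrak{M}_X$ we must have $q>X^\alpha$. Weyl's inequality then gives
\[
\sup_{\xi\in\mathfrak{m}_X}|f_X(\xi)|\lesssim_k X^{1-\sigma(k)\alpha+\e}
\]
for any $\e>0$, where $\sigma(k)=\frac{1}{k(k-1)}$ (possibly a slightly different exponent $2^{1-k}$ if one uses the classical form of Weyl; either works). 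Then I would estimate
\[
|m_k(N)|\le \Big(\sup_{\xi\in\mathfrak{m}_X}|f_X(\xi)|\Big)^{d-2}\int_0^1|f_X(\xi)|^2\,\dif\xi\le X^{(1-\sigma(k)\alpha+\e)(d-2)}\cdot X,
\]
since $\int_0^1|f_X(\xi)|^2\,\dif\xi=\#\{(n_1,n_2)\in[1,X]^2:n_1^k=n_2^k\}=X$. Using $X\simeq N^{1/k}$ this yields $|m_k(N)|\lesssim_k N^{d/k-1}\cdot N^{1/k}\cdot N^{-\sigma(k)\alpha(d-2)/k+\e(d-2)/k}$.

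The heart of the argument is then to check that the extra factor beats the main term for $N\ge C_k d^{5k^2/\alpha}$, \emph{with constants independent of $d$}. For this I would invoke Proposition~\ref{prop:singest} (the singular-series lower bound stated later in the paper) to get $\mathfrak{S}(N)\gtrsim^d 1$, i.e.\ $\mathfrak{S}(N)\ge c^d$ for some absolute $c>0$; together with the trivial bound $\Gamma(1+1/k)^d\ge c_k^d$ one reduces matters to showing
\[
\frac{N^{1/k}N^{-\sigma(k)\alpha(d-2)/k}}{\text{(something like } c^d\Gamma(d/k)^{-1})}=o(1)
\]
uniformly in $d$. Here I would use Stirling to write $\Gamma(d/k)^{-1}\le C^d d^{-d/k}$, so the denominator is at least of the shape $(C')^{-d}d^{-d/k}$; this only \emph{helps}, since it is small, so in fact the task is to show the numerator $N^{1/k-\sigma(k)\alpha(d-2)/k}$ is small enough to absorb a factor $C^d d^{d/k}$. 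Taking logarithms (base $N$), it suffices that $\frac{1}{k}-\frac{\sigma(k)\alpha(d-2)}{k}+\frac{d}{k}\cdot\frac{\log(Cd)}{\log N}<0$, which, given $\log N\ge 5k^2\alpha^{-1}\log d+\log C_k$, holds once $d$ is large (and for small $d$ one adjusts $C_k$); the exponent $5k^2/\alpha$ is chosen precisely so that $\frac{d}{k}\cdot\frac{\log(Cd)}{\log N}$ is dominated by $\frac{\sigma(k)\alpha d}{2k}=\frac{\alpha d}{2k^2(k-1)}$.

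The main obstacle I anticipate is \emph{not} the Weyl estimate itself — that is classical — but the bookkeeping needed to keep every constant genuinely independent of $d$. In particular one must be careful that the $\e$ in Weyl's inequality (and the implied constant, which in the classical statement can depend on $\e$ but not on $X$) does not secretly hide a $d$-dependence when raised to the power $d-2$; the clean way around this is to absorb $X^{\e(d-2)/k}$ into the exponent by choosing $\e$ small relative to $\sigma(k)\alpha$ once and for all, e.g.\ $\e=\sigma(k)\alpha/4$, so that the net exponent is still negative and linear in $d$. A second minor point is that $\alpha\in(0,1/3)$ is a free parameter here; in the statement of Proposition~\ref{pro:marc} it is fixed but arbitrary in that range, so all estimates should be stated with explicit $\alpha$-dependence, and the threshold $d^{5k^2/\alpha}$ reflects the worst case as $\alpha\to 0$. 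Finally, I would double-check the endpoint cases $d=2^{k+1}$ and small $k$ by hand, enlarging $C_k$ if necessary, since the asymptotic regime $d\to\infty$ does not by itself cover them.
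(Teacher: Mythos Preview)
Your overall plan---Weyl's inequality for the supremum on $\mathfrak{m}_X$, then comparison with the main term via Stirling and the singular-series lower bound from Proposition~\ref{prop:singest}---is exactly the paper's. The substantive gap is the integral step. You use Parseval, $\int_0^1|f_X|^2=X$, whereas the paper uses Hua's lemma in Bourgain's sharpened form (Lemma~\ref{lem:hua}), $\int_0^1|f_X|^{k(k+1)}\lesssim_\e X^{k^2+\e}$. With your choice one obtains
\[
|m_k(N)|\le C_\e^{d-2} X^{(1-\sigma(k)\alpha+\e)(d-2)+1}=C_\e^{d-2}\,X^{d-k}\cdot X^{(k-1)-(\sigma(k)\alpha-\e)(d-2)},
\]
so the excess over $N^{d/k-1}\simeq X^{d-k}$ carries exponent $(k-1)-(\sigma(k)\alpha-\e)(d-2)$; your displayed factor $N^{1/k}$ is only correct when $k=2$, and in general should be $N^{(k-1)/k}$. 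For this excess to be negative one needs
\[
d>2+\frac{k-1}{\sigma(k)\alpha-\e}\approx 2+\frac{k(k-1)^2}{\alpha},
\]
which for $\alpha<1/3$ already forces $d\ge 9$ when $k=2$ (while the proposition must cover $d=2^{k+1}=8$), and for $k=3$ forces $d\gtrsim 38$ against the allowed $d=16$. In that range the excess exponent is \emph{positive}, so your upper bound for $|m_k(N)|/N^{d/k-1}$ tends to infinity with $N$; enlarging $C_k$ cannot repair this, because raising the threshold on $N$ only makes a positive power of $X$ larger. The sentence ``for small $d$ one adjusts $C_k$'' therefore hides a genuine failure, not a bookkeeping issue.

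The paper circumvents this by putting $p=k(k+1)$ copies inside the integral, for which Hua's lemma gives the essentially optimal bound $X^{p-k+\e}$; the resulting excess exponent is $-(\sigma(k)\alpha-\e)(d-k(k+1))+\e$, negative as soon as $d>k(k+1)$ and hence for all $d\ge 2^{k+1}$. The classical form of Hua with $p=2^k$ would also suffice. Your handling of the $C_\e^{d}$ factors, the choice $\e\sim\sigma(k)\alpha$, and the comparison against $\Gamma(d/k)\simeq_k^d d^{d/k}$ are all correct; once the $L^2$ step is replaced by Hua's lemma, the rest of your argument goes through and matches the paper's.
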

We make use of a sharpened form of Weyl's inequality proven by Bourgain \cite{b1} (see also \cite[Lemma 3.1]{Dav}).
\begin{lem}[{\cite[Theorem 5]{b1}}]\label{weyline}
Suppose that $\xi\in[0,1]$ has a rational approximation $a/q$ satisfying
\[
(a,q)=1,\quad q\in\NN,\quad \Big|\xi-\frac{a}{q}\Big|\le \frac{1}{q^2}.
\]
Then for every $\e>0$ there is a constant $C_{\e}>0$ such that for every
	$X\in \NN$
	\begin{align}\label{Wineq}
		|f_X(\xi)|\le C_{\e}X^{1+\e}
		\bigg(\frac{1}{q}+\frac{1}{X}+\frac{q}{X^{k}}\bigg)^{\sigma(k)}.
	\end{align}
\end{lem}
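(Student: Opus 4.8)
The inequality \eqref{Wineq} is Bourgain's sharpened Weyl estimate \cite[Theorem 5]{b1}, and we shall use it as a black box; still, let us indicate how one would prove it. The whole point of \eqref{Wineq} is the exponent $\sigma(k)=\frac1{k(k-1)}$, which for $k\ge 3$ is far larger than the exponent $\frac1{2^{k-1}}$ coming from $k-1$ rounds of classical Weyl differencing; this gain is bought by feeding in the sharp form of Vinogradov's mean value theorem. So the plan is to first recall the Main Conjecture for Vinogradov systems, now a theorem (Wooley for $k=3$ and Bourgain--Demeter--Guth in general): with
\[
J_{s,k}(X):=\#\Bigl\{(\vec x,\vec y)\in\{1,\dots,X\}^{2s}:\ \textstyle\sum_{i=1}^s x_i^{j}=\sum_{i=1}^s y_i^{j}\ \ (1\le j\le k)\Bigr\},
\]
one has $J_{s,k}(X)\lesssim_{\e} X^{s+\e}$ for every integer $1\le s\le \tfrac12 k(k-1)$. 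I would then specialise to the critical value $s=\tfrac12 k(k-1)$, for which $\frac1{2s}=\sigma(k)$.

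The next step is to transfer this mean value bound -- which a priori controls only the $2s$-th moment of the exponential sum over the whole moment curve -- into a pointwise bound at the single frequency $\xi$. Writing $\xi=a/q+\beta$ with $|\beta|\le q^{-2}$, the standard device (carried out in \cite{b1}) is a minor-arc/major-arc dichotomy driven by the sharp $J_{s,k}$ estimate: one shows that unless $|f_X(\xi)|\le C_\e X^{1+\e}\bigl(q^{-1}+X^{-1}+qX^{-k}\bigr)^{\sigma(k)}$ already holds, the frequency $\xi$ must lie deep inside a major arc, on which $q$ and $|\beta|X^{k}$ are both small. In that case I would replace $f_X(\xi)$ by its classical main term $\bigl(\tfrac1q\sum_{r=1}^q\ex{ar^k/q}\bigr)\int_0^X\ex{\beta x^k}\,{\rm d}x$, bound the normalised Gauss sum by $\lesssim_\e q^{-1/k+\e}$ and the oscillatory integral by $\lesssim\min\bigl(X,|\beta|^{-1/k}\bigr)$, and verify that the product is again dominated by the right-hand side of \eqref{Wineq}; the three terms $q^{-1}$, $X^{-1}$ and $qX^{-k}$ reflect respectively the size of the denominator, the length of the sum, and the oscillatory integral. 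Combining the two branches and absorbing the harmless factor $X^{\e}$ completes the argument.

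The genuinely deep input is the sharp Vinogradov mean value theorem itself, whose proof rests on $\ell^2$-decoupling for the moment curve $(t,t^2,\dots,t^k)$ (or, alternatively, on efficient congruencing); that is where I expect the real difficulty to lie. Granting it, the remaining work is the bookkeeping in the transference step: checking that the critical choice $s=\tfrac12 k(k-1)$ produces exactly the exponent $\sigma(k)$ and not something weaker, and that the major-arc comparison yields precisely the three error terms with the stated powers. When $k=2$ everything degenerates to the classical bound $|f_X(\xi)|\lesssim_\e X^{1+\e}\bigl(q^{-1}+X^{-1}+qX^{-2}\bigr)^{1/2}$ for quadratic Gauss sums, which can be proved by hand without any of this machinery.
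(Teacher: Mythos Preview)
The paper does not prove this lemma at all: it is quoted verbatim from Bourgain \cite[Theorem~5]{b1} and used as a black box, exactly as you propose. Your additional sketch of the argument (sharp Vinogradov mean value at the critical exponent $s=\tfrac12 k(k-1)$, then transference to a pointwise bound via a major/minor-arc dichotomy) is a correct outline of Bourgain's proof and goes beyond what the paper provides, but since neither you nor the paper actually carries it out there is nothing to compare.
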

\begin{proof}
	When $k\ge 3$ this is proved in  \cite[Theorem 5]{b1}. When $k=2$ we have $2^{1-k}=\sigma(k),$ hence the statement follows from the classical Weyl inequality, see e.g.\ \cite[Lemma 2.4]{Vo}.
\end{proof}

As a consequence of Lemma \ref{weyline} we obtain an estimate for $f_X(\xi)$ on minor arcs.

\begin{cor}\label{lem:fXmin}
Let $N\in\NN$ and assume that $\xi\in\mathfrak{m}_{\X_N}.$ Then for
	every $\e>0$ there is a constant $C_{\e}>0$ such that
	\begin{align}\label{eq:fXmin}
		|f_{X_N}(\xi)|\le C_{\e}\X_{N}^{1+\e-\alpha\sigma(k)}.
	\end{align}
\end{cor}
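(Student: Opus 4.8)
The plan is to combine Dirichlet's theorem on Diophantine approximation with the sharpened Weyl inequality of Lemma~\ref{weyline}. Regarding $[0,1]$ as the circle $\RR/\ZZ$ — the natural domain of the $1$-periodic function $f_{\X}$ — I would apply Dirichlet's theorem (see, e.g., \cite{Vo}) with parameter $Q=\X^{k-\alpha}$; since $\X\ge 1$ and $k\ge 2>\alpha$ we have $Q\ge 1$, so there are coprime integers $a,q$ with $1\le q\le \X^{k-\alpha}$ and
\[
\Big|\xi-\frac{a}{q}\Big|\le\frac{1}{q\,\X^{k-\alpha}}\le\frac{1}{q^2},
\]
where the last inequality holds precisely because $q\le\X^{k-\alpha}$. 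In particular $\xi$ satisfies the hypothesis of Lemma~\ref{weyline} with this rational approximation.

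Next I would use the assumption $\xi\in\mathfrak{m}_{\X}$ to force $q>\X^{\alpha}$. Indeed, if instead $q\le\X^{\alpha}$, then $\frac{1}{q\,\X^{k-\alpha}}\le\X^{-k+\alpha}$, hence $|\xi-a/q|\le\X^{-k+\alpha}$, so that $\xi\in\mathfrak{M}_{\X}(a/q)\subseteq\mathfrak{M}_{\X}$, contradicting $\xi\in\mathfrak{m}_{\X}=[0,1]\setminus\mathfrak{M}_{\X}$.

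With $\X^{\alpha}<q\le\X^{k-\alpha}$ in hand I would bound the three terms inside the parentheses in \eqref{Wineq}: one has $\tfrac1q<\X^{-\alpha}$, also $\tfrac1\X\le\X^{-\alpha}$ since $0<\alpha<1$, and finally $\tfrac{q}{\X^{k}}\le\X^{-\alpha}$ since $q\le\X^{k-\alpha}$. Therefore $\tfrac1q+\tfrac1\X+\tfrac{q}{\X^{k}}\le 3\X^{-\alpha}$, and Lemma~\ref{weyline} gives
\[
|f_{\X}(\xi)|\le C_{\e}\X^{1+\e}\big(3\X^{-\alpha}\big)^{\sigma(k)}=3^{\sigma(k)}C_{\e}\,\X^{1+\e-\alpha\sigma(k)}.
\]
Since $3^{\sigma(k)}$ depends only on $k$ it is absorbed into the constant, which yields \eqref{eq:fXmin}.

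No step presents a real difficulty; the only points requiring attention are the choice $Q=\X^{k-\alpha}$ in Dirichlet's theorem — small enough that the resulting denominator obeys the hypothesis of Lemma~\ref{weyline}, yet, for a minor-arc $\xi$, large enough to make each term in \eqref{Wineq} of size at most $\X^{-\alpha}$ — and the contrapositive step identifying an approximation with $q\le\X^{\alpha}$ as belonging to a major arc. (When $\X$ is so small that $\mathfrak{m}_{\X}=\emptyset$ the claim is vacuously true.)
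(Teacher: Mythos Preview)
Your proof is correct and follows essentially the same approach as the paper: apply Dirichlet's theorem with parameter $Q=\X^{k-\alpha}$, deduce $\X^{\alpha}<q\le\X^{k-\alpha}$ from the minor-arc hypothesis, and plug the resulting bounds into Lemma~\ref{weyline}. The only difference is that you spell out the contrapositive argument for $q>\X^{\alpha}$ and the periodicity consideration, whereas the paper simply asserts the existence of such an approximation.
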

\begin{proof}
If $\xi\in\mathfrak{m}_{\X_N}$ then by Dirichlet's pigeonhole principle one can always find $\X_N^{\alpha}<q\le \X_N^{k-\alpha}$ and $1\le a\le q$ such that $(a, q)=1$ and
$$\Big|\xi-\frac{a}{q}\Big|\le\frac{1}{q\X_N^{k-\alpha}}\le\frac{1}{q^2}.$$
Hence, by applying Lemma~\ref{weyline} we obtain that, for every $\varepsilon>0,$ one has
\begin{equation*}
	\left|f_{\X_N}(\xi)\right|\lesssim_{\varepsilon} \X_N^{1+\e}\bigg(\frac{1}{q}+\frac{1}{\X_N}+\frac{q}{\X_N^{k}}\bigg)^{\sigma(k)}\lesssim\X_N^{1+\e} \big(\X_N^{-\alpha}+\X_N^{-1}+\X_N^{-\alpha}\big)^{\sigma(k)} \lesssim X_N^{1+\e-\alpha\sigma(k)}.
\end{equation*}
\end{proof}
The following estimate for the Gauss sums can be found in \cite{Dav}.
\begin{lem}[{\cite[Lemma 6.4]{Dav}}]\label{lem:egau}
	Assume that $1\le a\le q$ satisfies $(a,q)=1.$ Then for
	every $k\geq 2$ there is a constant $C_{k}>0$ such that
	\begin{align}\label{eq:egau}
		\big|G(a/q)\big|\le C_{k}q^{-1/k}.
	\end{align}
\end{lem}
Finally, we will need an improved version of the classical Hua's lemma (see \cite[Lemma 3.2]{Dav}) proved by Bourgain.
\begin{lem}[{\cite[Theorem 10]{b1}}]\label{lem:hua} For every $X\in\NN$ and every $\e>0$ there is a constant $C_\e>0$ such that for each $k\in\NN$ we have
	\begin{align*}
		\int_0^1\big|f_{\X}(\xi)\big|^{k(k+1)}{\rm d}\xi\leq C_\e X^{k^2+\e}.
	\end{align*}
\end{lem}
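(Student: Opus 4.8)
The plan is to deduce the inequality from the circle of results surrounding the main conjecture in Vinogradov's mean value theorem; in fact the statement is \cite[Theorem~10]{b1}, and what follows only sketches that route. First I would record the harmless reduction that, since $\X=\floor{N^{1/k}}\le N^{1/k}$ and $k(k+1)-k=k^2$, it suffices to prove
\begin{equation*}
\int_0^1\big|f_{\X}(\xi)\big|^{k(k+1)}\,{\rm d}\xi\lesssim_{\e}\X^{k^2+\e}.
\end{equation*}
Inserting $\X\le N^{1/k}$ upgrades this to $\lesssim_\e N^{k+\e}$, which is stronger than the asserted $\lesssim_\e N^{k^2+\e}$. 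Writing $2s=k(k+1)$, the left-hand side counts the integer solutions of $n_1^k+\cdots+n_s^k=m_1^k+\cdots+m_s^k$ with all variables in $[1,\X]$, and $\X^{k^2}=\X^{2s-k}$ is precisely the order of magnitude one expects (diagonal together with the singular-series main term) once $2s$ is this large; so the claim is that the naive heuristic is correct up to $\X^{\e}$.

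Next I would split off the elementary range of $k$. For $2\le k\le 4$ one has $2^k\le k(k+1)$, so the classical Hua lemma \cite[Lemma~3.2]{Dav}, i.e.\ $\int_0^1|f_{\X}(\xi)|^{2^k}\,{\rm d}\xi\lesssim_\e\X^{2^k-k+\e}$, together with the trivial bound $|f_{\X}(\xi)|\le\X$, already gives the result:
\begin{align*}
\int_0^1\big|f_{\X}(\xi)\big|^{k(k+1)}\,{\rm d}\xi
&\le\X^{k(k+1)-2^k}\int_0^1\big|f_{\X}(\xi)\big|^{2^k}\,{\rm d}\xi\\
&\lesssim_\e\X^{k(k+1)-2^k}\cdot\X^{2^k-k+\e}=\X^{k^2+\e}.
\end{align*}
For $k\ge 5$, however, $k(k+1)<2^k$, so this shortcut disappears; moreover no interpolation among the dyadic Hua bounds of \cite[Lemma~3.2]{Dav} reaches the exponent $k^2$ (already for $k=5$, interpolating the $16$th and $32$nd moments yields only the exponent $\tfrac{201}{8}>25=k^2$). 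For these $k$ I would invoke the $\ell^2$-decoupling theorem for the moment curve $t\mapsto(t,t^2,\ldots,t^k)$ — equivalently the now-proven main conjecture in Vinogradov's mean value theorem, the same input that underlies the sharpened Weyl inequality of Lemma~\ref{weyline} — combined with the by-now-standard passage from the full Vinogradov system to a single diagonal equation via Weyl differencing; this is how \cite[Theorem~10]{b1} produces the bound $\lesssim_\e\X^{k^2+\e}$, and Wooley's efficient congruencing method \cite{Woo2,Woo0} yields the same conclusion by a different route.

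The hard part is thus located entirely in the $k\ge 5$ regime, and it is not reachable by elementary means: classical differencing stalls at the threshold $2^k$, and lowering the Hua threshold to $k(k+1)$ requires the full strength of the decoupling theorem (or of efficient congruencing), both to secure the sharpened Weyl input and, more essentially, to obtain a sharp mean value bound below the exponent $2^k$. Once that machinery is granted, the remaining bookkeeping — the distribution of the $\X^{\e}$ losses, the convergence of the relevant singular series — is routine, all the more so because $C_\e$ is permitted to depend on $k$ and $\e$ and no uniformity in the dimension $d$ is required.
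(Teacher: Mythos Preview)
The paper does not prove this lemma at all; it is simply quoted from Bourgain \cite[Theorem~10]{b1} and used as a black box in the minor-arc estimate. Your proposal is therefore not competing with any argument in the paper --- rather, it supplies a correct road map to the cited result. Your elementary treatment of $2\le k\le 4$ via the classical Hua lemma plus the trivial bound is clean and right, and your diagnosis that the $k\ge 5$ range genuinely requires the main conjecture in Vinogradov's mean value theorem (via decoupling or efficient congruencing) is accurate, including the check that interpolating the dyadic Hua bounds falls short at $k=5$. You also correctly note that the sharp form is $\int_0^1|f_{\X}(\xi)|^{k(k+1)}\,{\rm d}\xi\lesssim_\e \X^{k^2+\e}$, hence $\lesssim_\e N^{k+\e}$; the paper's stated bound $N^{k^2+\e}$ is weaker (indeed, in the proof of Proposition~\ref{pro:marc} the authors actually use $\X^{k^2+\e}$). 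One small caveat: for $k\ge5$ your sketch ultimately defers to \cite[Theorem~10]{b1} just as the paper does, so while everything you write is correct, the ``proof'' in that range remains a citation rather than an argument.
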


Having stated Lemmas \ref{weyline} and \ref{lem:hua} we now proceed with the proof of Proposition \ref{pro:marc}.
 
\begin{proof}[Proof of Proposition~\ref{pro:marc}]
Let us remind that $X_N=\floor{N^{1/k}}$. By applying Lemmas \ref{lem:fXmin} and \ref{lem:hua}, for any $\e>0$, one obtains
\begin{align*}
	\Big|\int_{\mathfrak{m}_{\X_N}}\big(f_{\X_N}(\xi)\big)^d \ex{-\xi N }{\rm d}\xi\Big|
	&\le\sup_{\xi\in\mathfrak{m}_{\X_N}}\big|f_{\X_N}(\xi) \big|^{d-k(k+1)}\int_0^1\big|f_{\X_N}(\xi)\big|^{k(k+1)}{\rm d}\xi\\
	&\lesssim_{k,\e} C_\e^d
	\X_N^{(1-\alpha\sigma(k)+\e)(d-k(k+1))}\X_N^{k^2+\e}\lesssim C_\e^d N^{\frac{d}{k}-1-\frac{\gamma}{k}},
\end{align*}
where $\gamma=(\alpha\sigma(k)-\e)(d-k(k+1))-\e$ is non-negative provided that we take $\e<\alpha\sigma(k)(d-k(k+1))/(1+d-k(k+1))$. Hence we just proved that
\begin{equation}\label{eq:EkN}
	|m_k(N)|\lesssim_{k,\e} C_\e^d N^{\frac{d}{k}-1-\frac{\gamma}{k}}.
\end{equation}
Since one has $\Gamma\left(1+\frac{1}{k}\right)^d\simeq_k^d 1$ and $\Gamma\left(\frac{d}{k}\right)\simeq_k^d d^{d/k}$ we see that
\begin{equation} \label{eq:mterm1}
	\frac{\Gamma\left(1+\frac{1}{k}\right)^d}{\Gamma\left(\frac{d}{k}\right)} \simeq_k^d d^{-d/k}.
\end{equation}
If $d\ge 2^{k+1}$ and $\e=\sigma(k)\alpha/20$ then $\gamma\ge \sigma(k)\alpha d/5$. Hence, by applying \eqref{eq:EkN} we get
\[
	|m_k(N)|\lesssim_k^d N^{-d\frac{\sigma(k)\alpha}{5k}}  N^{d/k-1}.
\]
In view of \eqref{eq:mterm1} and Proposition~\ref{prop:singest} we can write
\begin{align*}
	|m_k(N)|\Big(\mathfrak{S}(N) \frac{\Gamma\left(1+\frac{1}{k}\right)^d}{\Gamma\left(\frac{d}{k}\right)}N^{d/k-1}\Big)^{-1}\lesssim_k (C_k d^{1/k}N^{-\frac{\sigma(k)\alpha}{5k}})^d.  	
\end{align*}
Therefore, if we take $N\ge C_k d^{5k^2/\alpha}$ for an appropriately large $C_k>0$ we reach
\[
	|m_k(N)|\Big(\mathfrak{S}(N) \frac{\Gamma\left(1+\frac{1}{k}\right)^d}{\Gamma\left(\frac{d}{k}\right)}N^{d/k-1}\Big)^{-1}=o(1).
\]
This completes the proof of Proposition~\ref{pro:marc}.
\end{proof}
The next proposition asserts us that the singular series is uniformly bounded, up to multiplicative bounds, with respect to $N\in\NN$ and $d\geq 2^{k+1}$.  
\begin{prop}\label{prop:singest}
For any $k\geq 2$ and $d\geq 2^{k+1}$ there exists constants $a_k,A_k>0$ such that
\begin{equation}
    a_k^d\leq \mathfrak{S}(N)\leq A_k.
\end{equation}
\end{prop}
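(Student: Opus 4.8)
The plan is to establish the standard Euler product factorization of the singular series and then bound each local factor from above and below by constants depending only on $k$, with the infinite product converging uniformly in $d\ge 2^k+1$. First I would recall that $\mathfrak{S}(N)$ is multiplicative in $q$ in the sense that, grouping the inner sum $A(q):=\sum_{(a,q)=1}G(a/q)^d\ex{-Na/q}$, one has $A(q_1q_2)=A(q_1)A(q_2)$ whenever $(q_1,q_2)=1$; this follows from the Chinese Remainder Theorem applied to both the Gauss sum $G(a/q)$ and the additive character $\ex{-Na/q}$. Consequently $\mathfrak{S}(N)=\prod_{p\in\mathbb{P}}\chi_p(N)$ where the local factor is $\chi_p(N)=\sum_{j=0}^{\infty}A(p^j)=1+\sum_{j\ge1}A(p^j)$. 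The product is absolutely convergent: using Corollary~\ref{lem:egau}, $|G(a/q)|\le C_{\e}q^{\e-\sigma(k)}$, so with $d\ge 2^k+1\ge k(k+1)+\text{(something)}$ — more precisely $d\ge 2^k+1$ already gives $d\sigma(k)>1$ for $k\ge2$ after a short check, but to be safe one takes $d\ge 2^k+1$ and notes $d\,\sigma(k)-d\e\ge 2$ for small $\e$ — we get $|A(q)|\le C_{\e}^{d}\,\varphi(q)\,q^{d\e-d\sigma(k)}\le C_{\e}^{d} q^{1+d\e-d\sigma(k)}$, which is summable; in particular $\sum_{q\ge2}|A(q)|\lesssim_k^{d} 2^{-c_k d}$ for a constant $c_k>0$.

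Next I would prove the upper bound $\mathfrak{S}(N)\le A_k$. From the factorization, $\mathfrak{S}(N)=\prod_p \chi_p(N)$ with $\chi_p(N)=1+\sum_{j\ge1}A(p^j)$ and $|A(p^j)|\le C_{\e}^{d}p^{j(1+d\e-d\sigma(k))}$. For all but finitely many primes (say $p>P_k$, where $P_k$ depends only on $k$) the exponent $1+d\e-d\sigma(k)$ being negative makes $|\sum_{j\ge1}A(p^j)|\le \tfrac12 p^{-(d\sigma(k)-1-d\e)}$, so $\chi_p(N)\le 1+p^{-s}$ with $s=s_k>1$, and $\prod_{p>P_k}(1+p^{-s})\le \exp(\sum_p p^{-s})\le \zeta(s_k)$ — a constant depending only on $k$. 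For the finitely many small primes $p\le P_k$ one uses the crude bound $|G(a/q)|\le 1$ (immediate from the definition, since it is a normalized sum of unit-modulus terms), giving $|A(p^j)|\le \varphi(p^j)\le p^j$; but this does not obviously sum. The cleaner route for the small primes is the well-known representation $\chi_p(N)=\lim_{h\to\infty}p^{-h(d-1)}M_p(h)$ where $M_p(h)=\#\{(n_1,\dots,n_d)\in(\ZZ/p^h\ZZ)^d:\ \sum n_i^k\equiv N\bmod p^h\}$; this is manifestly nonnegative, and a uniform upper bound $\chi_p(N)\le C(p,k)$ follows from a counting argument, with only finitely many such $C(p,k)$ entering.

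The lower bound $\mathfrak{S}(N)\ge a_k^{d}$ is where I expect the main obstacle, and it is the reason the exponent $a_k^d$ (rather than a constant) appears in the statement. The strategy is: (i) for large primes $p>P_k$, bound $\chi_p(N)\ge 1-\sum_{j\ge1}|A(p^j)|\ge 1-p^{-s_k}$, so $\prod_{p>P_k}\chi_p(N)\ge \prod_p(1-p^{-s_k})=1/\zeta(s_k)>0$, a constant depending only on $k$; (ii) for each small prime $p\le P_k$ one must show $\chi_p(N)$ is bounded \emph{below} uniformly in $N$, and this is precisely the classical statement that the congruence $\sum n_i^k\equiv N\pmod{p^h}$ is solvable with the expected density once $d$ is large enough — here one invokes Hensel-type lifting together with the fact that for $d\ge 2^k+1$ (indeed for $d$ somewhat smaller) every residue is a sum of $d$ $k$-th powers mod $p^h$; the quantitative lower bound from this argument degrades like $c(p,k)^{d}$ in the worst small-prime case (e.g.\ $p=2$ when $k$ is large, where the number of $k$-th power residues mod $2^h$ can be as small as a bounded number, forcing a loss exponential in $d$). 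Multiplying the finitely many factors $c(p,k)^{d}$ over $p\le P_k$ gives $\prod_{p\le P_k}\chi_p(N)\ge a_k'^{\,d}$, and combining with (i) yields $\mathfrak{S}(N)\ge a_k^{d}$ for a suitable $a_k\in(0,1)$. I would present the large-prime tail estimate first (it is the routine part, driven entirely by Corollary~\ref{lem:egau} and the hypothesis $d\ge 2^k+1$), then handle the small primes via the local solvability count, flagging the $p=2$ case as the source of the dimension-dependent constant.
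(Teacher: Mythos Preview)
Your plan for the lower bound is essentially the paper's: Euler product $\mathfrak{S}(N)=\prod_p\chi_N(p)$, bound the tail over large primes by $\prod_{p>p_0}\chi_N(p)\ge 1/2$ via the Gauss sum estimate, and for the finitely many small primes invoke the local solvability result (the paper cites Nathanson, who gives precisely $\chi_N(p)\ge p^{\gamma_k(1-d)}$ for $d\ge 2^k+1$). Multiplying the small-prime factors yields the $a_k^d$ shape.

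The upper bound, however, has a gap. Your estimate $|A(q)|\le C_\e^d\,q^{1+d\e-d\sigma(k)}$ carries an exponential-in-$d$ constant, and you then assert that for $p>P_k$ one has $\sum_{j\ge1}|A(p^j)|\le\tfrac12 p^{-s_k}$; absorbing $C_\e^d$ here needs the observation that $(C_\e p^{-(\sigma(k)-\e)})^d\le (C_\e p^{-(\sigma(k)-\e)})^{2^k+1}$ once the base is below $1$, which you do not make explicit. More seriously, for the small primes $p\le P_k$ you fall back on the density interpretation and claim a bound $\chi_p(N)\le C(p,k)$ ``from a counting argument'', but a $d$-independent upper bound on $\chi_p(N)$ does not follow from crude counting alone---the naive bound $|A(p^j)|\le\varphi(p^j)$ using $|G|\le1$ does not even sum, and any Gauss-sum input reintroduces $C_\e^d$.

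The paper's remedy is a single line you are missing: since $|G(a/q)|\le1$ and $d\ge 2^k+1$, one has $|G(a/q)|^d\le|G(a/q)|^{2^k+1}$, so Corollary~\ref{lem:egau} with $\e=\sigma(k)/10$ gives
\[
|A(q)|\le\sum_{\substack{a=1\\(a,q)=1}}^{q}|G(a/q)|^{2^k+1}\lesssim_k q^{1-(2^k+1)\cdot\tfrac{9}{10}\sigma(k)}\lesssim_k q^{-11/10},
\]
a bound \emph{completely free of $d$}. Summing over all $q\ge1$ directly yields $\mathfrak{S}(N)\le A_k$ with no Euler product and no small/large prime split. This is the cleanest way to dispose of the $C_\e^d$ issue, and it is exactly what you should do instead of the route through local densities.
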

\begin{proof}
At first we note that for any $a/q$ with $(a,q)=1$ we have $|G(a/q)|\leq 1$. Hence, by Corollary~\ref{lem:egau} the fact that $d\geq 2^{k+1}$ we obtain the estimate
\begin{equation}\label{eq:indestimd}
    \Big|\sum_{\genfrac{}{}{0pt}{}{a=1}{(a, q)=1}}^qG(a/q)^d\ex{-Na/q}\Big|\leq\sum_{\genfrac{}{}{0pt}{}{a=1}{(a, q)=1}}^q|G(a/q)|^{2^{k+1}}\lesssim_k q^{1-(2^{k+1})/k}\lesssim q^{-3/2}.
\end{equation}
Consequently, for $d\geq 2^{k+1}$, we get
\begin{equation*}
    |\mathfrak{S}(N)|\leq\sum_{q=1}^\infty\Big|\sum_{\genfrac{}{}{0pt}{}{a=1}{(a, q)=1}}^qG(a/q)^d\ex{-Na/q}\Big|\lesssim_k\sum_{q=1}^\infty q^{-3/2}\lesssim_k 1.
\end{equation*}
Since $\mathfrak{S}(N)>0$ we obtain the upper bound for $\mathfrak{S}(N)$. To prove the lower bound let us define
\begin{equation*}
    \chi_N(p):=\sum_{h=0}^\infty \sum_{\genfrac{}{}{0pt}{}{a=1}{(a, p^h)=1}}^{p^h}G(a/p^h)^d\ex{-Na/p^h},\quad p\in\mathbb{P}.
\end{equation*}
It is known, see \cite[Lemma 5.7]{Nat}, that
\begin{equation}\label{prod}
    \mathfrak{G}(N)=\prod_{p\in\mathbb{P}}\chi_N(p).
\end{equation}
Moreover, by \cite[Lemma 5.10]{Nat} there is a natural number $\gamma_k$ which depends only on $k\in\NN$ such that
\begin{equation}\label{eq:estlower}
    \chi_N(p)\geq p^{\gamma_k(1-d)}>0,
\end{equation}
provided that $d\geq 2^{k+1}$. By following the proof of \cite[Lemma 5.7, eq. (5.14)]{Nat} and using the estimate \eqref{eq:indestimd} one can prove that there is $p_0=p_0(k)$ such that
\begin{equation*}
    1/2\leq\prod_{p\geq p_0}\chi_N(p)\leq 3/2.
\end{equation*}
Hence, by \eqref{prod} and \eqref{eq:estlower} we can estimate
\begin{equation*}
    \mathfrak{G}(N)\geq 1/2\prod_{p\leq p_0}\chi_N(p)\geq 1/2\prod_{p\leq p_0}p^{\gamma_k(1-d)}\geq1/2^d\prod_{p\leq p_0}p^{-\gamma_kd}=:a_k^d>0.
\end{equation*}
\end{proof}
\begin{rem}
The condition $d\geq 2^{k+1}$ may be weakened to $d\geq 2k+1$ when $k$ is odd and $d\geq 4k$ when $k$ is even. This threshold is related to the estimate \eqref{eq:estlower} which holds  for $d\geq 2k+1$ ($k$ odd) and $d\geq 4k$ ($k$ even). It is interesting whenever the singular series may be bounded independently of the dimension $d$. In \cite[Inequality (3.27)]{miszwr1} the authors were able to prove that for $k=2$ and $d\geq 16$ we have
\begin{equation*}
    \frac{1}{2}\leq \mathfrak{G}(N)\leq\frac{3}{2}.
\end{equation*}
Their argument heavily relies on the fact that $k=2$. 
\end{rem}
\subsection{Major arc estimate}
Now we need to estimate the contribution from the major arc term $M_k(N)$. Recall that for $1\leq q\leq X_N^\alpha$ we constructed the major arcs $\mathfrak{M}_{X_N}(a/q)$ for some fixed $\alpha\in(0,1/4)$ which will be adjusted later.
\begin{prop}\label{pro:Marc}
For each $k\in \NN$ there exists a constant $C_{k,\alpha}>0$ such that, for $d\ge 2^{k+1}$ and $N\ge C_{k,\alpha} d^{d/\delta}$, with some $0<\delta<\min\{1-4\alpha,\alpha\}$, we have
	\begin{equation}
		M_k(N)=\mathfrak{S}(N)\frac{\Gamma\left(1+\frac{1}{k}\right)^d}{\Gamma\left(\frac{d}{k}\right)}N^{d/k-1}(1 + o(1)),
	\end{equation}
where the implicit constants in the symbol $o(1)$ are independent of the dimension $d.$
\end{prop}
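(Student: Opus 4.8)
The plan is to carry out the standard major-arc analysis of the circle method while keeping careful track of all dependence on the dimension $d$. Since the arcs $\mathfrak{M}_{\X}(a/q)$ are pairwise disjoint, one may write
\[
M_k(N)=\sum_{q\le\X^\alpha}\,\sum_{\substack{a=1\\(a,q)=1}}^{q}\ex{-Na/q}\int_{|\beta|\le\X^{-k+\alpha}}f_{\X}(a/q+\beta)^d\,\ex{-\beta N}\,\dif\beta.
\]
On a single arc I would split $f_{\X}(a/q+\beta)$ into residue classes modulo $q$ and compare each inner sum with an integral, obtaining a pointwise approximation of the shape $f_{\X}(a/q+\beta)\approx G(a/q)\,I(\beta)$ with $I(\beta):=\int_0^{N^{1/k}}\ex{\beta x^k}\,\dif x$; the quantitative version, after raising to the $d$-th power and integrating over the arcs, is precisely Lemma~\ref{lem:M1}. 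Granting it, $M_k(N)$ equals, up to an error that is $o(1)$ relative to the target main term, the product of the arithmetic factor $\sum_{q\le\X^\alpha}\sum_{(a,q)=1}G(a/q)^d\ex{-Na/q}$ and the truncated integral $\int_{|\beta|\le\X^{-k+\alpha}}I(\beta)^d\ex{-\beta N}\,\dif\beta$. The remaining work is then to complete the arithmetic factor to $\mathfrak{S}(N)$, complete the integral to the singular integral $\mathfrak{J}(N):=\int_\RR I(\beta)^d\ex{-\beta N}\,\dif\beta$, and to evaluate $\mathfrak{J}(N)$.

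For the arithmetic factor I would estimate the tail $q>\X^\alpha$ exactly as in \eqref{eq:indestimd}: by Corollary~\ref{lem:egau} with a suitable choice of $\e$ it is $\lesssim_k\X^{-\alpha/10}$, which is negligible against the lower bound $\mathfrak{S}(N)\ge a_k^d$ from Proposition~\ref{prop:singest} provided $N$ exceeds a suitable $C_k^d$. For the integral I would extend it from $|\beta|\le\X^{-k+\alpha}$ to all of $\RR$; using the standard oscillatory-integral bound $|I(\beta)|\lesssim_k\min\bigl(N^{1/k},|\beta|^{-1/k}\bigr)$ and the inequality $d/k>1$ (valid since $d\ge 2^{k+1}$), the tail $|\beta|>\X^{-k+\alpha}$ contributes at most a constant times $\int_{|\beta|>\X^{-k+\alpha}}|\beta|^{-d/k}\,\dif\beta\lesssim_k\X^{(k-\alpha)(d/k-1)}$, and one checks that this is $o(1)$ relative to $d^{-d/k}N^{d/k-1}$ as soon as $N$ exceeds a fixed power of $d$ (with exponent depending only on $k$ and $\alpha$). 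Both constraints are comfortably implied by $N\ge C_kd^{d/\delta}$.

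It then remains to evaluate $\mathfrak{J}(N)$. The substitutions $x=N^{1/k}u$ followed by $u\mapsto u^{1/k}$ give $I(\beta)=N^{1/k}v(\beta N)$ with $v(\gamma)=\frac1k\int_0^1 u^{1/k-1}\ex{\gamma u}\,\dif u$, so a change of variables yields $\mathfrak{J}(N)=N^{d/k-1}\int_\RR v(\gamma)^d\ex{-\gamma}\,\dif\gamma$. Since $v$ is the Fourier transform of the density $\omega$ of $U^k$ with $U$ uniform on $[0,1]$, Fourier inversion (the needed integrability of $v^d$ following from $d>k$) identifies $\int_\RR v(\gamma)^d\ex{-\gamma}\,\dif\gamma$ with the $d$-fold convolution $\omega^{*d}$ evaluated at the point $1$, and the Dirichlet (Liouville) integral formula gives $\omega^{*d}(1)=\Gamma(1+1/k)^d/\Gamma(d/k)$. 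Hence $\mathfrak{J}(N)=\frac{\Gamma(1+1/k)^d}{\Gamma(d/k)}N^{d/k-1}$, and assembling the four pieces proves the proposition, with a $o(1)$ independent of $d$.

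The hard part, and the only place where the strong hypothesis $N\ge C_kd^{d/\delta}$ with $\delta<1-3\alpha$ is genuinely used, is Lemma~\ref{lem:M1} (cf. Remark~\ref{rem:problem}). The pointwise sum-to-integral comparison is classical; what is delicate is its cost after raising to the $d$-th power. The main term has size only $\simeq_k^d d^{-d/k}N^{d/k-1}$ because of the $\Gamma(d/k)$ in the denominator (see \eqref{eq:mterm1} together with Proposition~\ref{prop:singest}), so making the error relatively $o(1)$ forces one to beat a factor that is exponentially small in $d$. Since the per-factor comparison error is polynomial in $q(1+N|\beta|)\lesssim\X^{2\alpha}$, there are $d$ factors in $f_{\X}^d$, the number of arcs is $\simeq\X^{2\alpha}$, and their total length is $\simeq\X^{-k+\alpha}$, multiplying these contributions against the main term is exactly what forces $N$ to be at least a constant times $d^{d/\delta}$. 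I expect the bookkeeping here — the choice of $\e$ in the Weyl-type inputs and the passage between $\int_0^{\X}$ and $\int_0^{N^{1/k}}$ — to be the only genuinely technical computation; everything else is routine.
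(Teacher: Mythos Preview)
Your proposal is correct and follows essentially the same route as the paper. The paper also invokes Lemma~\ref{lem:M1} for the sum-to-integral replacement, then extends the truncated integral to all of $\RR$ via the bound of Lemma~\ref{lem:vlem} (your $|I(\beta)|\lesssim_k\min(N^{1/k},|\beta|^{-1/k})$), completes the truncated series to $\mathfrak{S}(N)$ using \eqref{eq:indestimd}, and finally quotes the identity $\int_{\RR}v(\xi)^d\ex{-\xi N}\,\dif\xi=\tfrac{\Gamma(1+1/k)^d}{\Gamma(d/k)}N^{d/k-1}$; your Fourier-inversion/Dirichlet-integral sketch is a proof of this identity. The only organisational difference is that the paper packages the integral extension and the series completion as separate Lemmas~\ref{lem:M2} and~\ref{lem:M3}, extending the integral \emph{before} completing the series (so that the series tail is multiplied by the exact $\mathfrak{J}(N)$ rather than the truncated integral), but this is cosmetic. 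One small notational point: when you raise the bound on $I(\beta)$ to the $d$-th power the implied constant becomes $C_k^d$, so the tail integral is $\lesssim_k^d\X^{(k-\alpha)(d/k-1)}$ rather than $\lesssim_k$; this does not affect your conclusion, and the paper handles the same $C_k^d$ factor in its proof of Lemma~\ref{lem:M2}.
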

The proof of Proposition \ref{pro:Marc} is based on several approximations of the major arc term $M_k(N).$ At first, denote by
\begin{equation}
	\label{eq:vdef}
	v(\theta):=\int_{0}^{X_N} \ex{\theta z^k}{\rm d}z,\quad \theta\in\RR
\end{equation}
the continuous counterpart of $f_{X_N}(\theta)$ and let
\begin{equation}\label{eq:M1def}
\mathcal{A}^1(N):=\sum_{1\le q\le X_N^\alpha}\sum_{\substack{a=1\\(a,q)=1}}^q G(a/q)^d\int_{\mathfrak{M}_{X_N}(a/q)}\big(v(\xi-a/q)\big)^d \ex{-\xi N }{\rm d}\xi,\quad N\in\NN
\end{equation}
be our first approximation of $M_k(N).$ In order to approximate $f_{X_N}(\theta)$ by its continuous counterpart $v(\theta)$ we make use of the following approximation lemma due to van der Corput.
\begin{lem}[{\cite[Lemma 9.1]{Dav}}]\label{vander}
Suppose $g(x)$ is a real function which is twice differentiable for $A\leq x\leq B$ with $A,B\in\RR$. Suppose further that, in this interval, one has
\[
0\leq g'(x)\leq\frac12,\quad g''(x)\geq 0.
\]
Then
\begin{equation}
    \sum_{A\leq n\leq B}\ex{g(n)}=\int_A^B\ex{g(x)}{\rm d}x+\mathcal{O}(1).
\end{equation}
\end{lem}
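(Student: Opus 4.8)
The plan is to prove the lemma by Euler--Maclaurin summation, followed by a Fourier expansion of the sawtooth function and a single integration by parts; the point is that the hypotheses $0\le g'\le\tfrac12$ and $g''\ge0$ are exactly what turn the error term, which is only trivially of size $\mathcal{O}(B-A)$, into $\mathcal{O}(1)$.

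First I would reduce to integer endpoints. Put $a:=\lceil A\rceil$ and $b:=\lfloor B\rfloor$. If $a>b$ there is no integer in $[A,B]$, hence $B-A<1$ and both sides of the claimed identity are $\mathcal{O}(1)$; if $a\le b$ then $[a,b]\subseteq[A,B]$, the sum over $A\le n\le B$ coincides with the sum over $a\le n\le b$, and $\int_A^B\ex{g(x)}\dif x=\int_a^b\ex{g(x)}\dif x+\mathcal{O}(1)$ since the two leftover intervals have length $<1$ and $|\ex{\cdot}|=1$. So assume $A=a$ and $B=b$ are integers. Applying the elementary Euler--Maclaurin identity (obtained by integrating by parts over each unit interval $[n,n+1]$) to $f(x)=\ex{g(x)}$, whose derivative is $f'(x)=2\pi i\,g'(x)\ex{g(x)}$, gives
\[
\sum_{a\le n\le b}\ex{g(n)}=\int_a^b\ex{g(x)}\dif x+\frac{\ex{g(a)}+\ex{g(b)}}{2}+2\pi i\int_a^b\Big(\{x\}-\tfrac12\Big)g'(x)\ex{g(x)}\dif x,
\]
where $\{x\}$ denotes the fractional part. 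The boundary term is $\mathcal{O}(1)$ because $|\ex{\cdot}|=1$, so it remains to prove that $R:=\int_a^b\big(\{x\}-\tfrac12\big)g'(x)\ex{g(x)}\dif x$ is $\mathcal{O}(1)$.

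To exploit cancellation I would insert the Fourier expansion $\{x\}-\tfrac12=-\frac{1}{2\pi i}\sum_{m\neq0}\frac{\ex{mx}}{m}$, whose symmetric partial sums $\sum_{0<|m|\le M}\frac{\ex{mx}}{m}=2i\sum_{1\le m\le M}\frac{\sin(2\pi mx)}{m}$ are uniformly bounded in $M$ and $x$ (a classical fact), so that, by dominated convergence on the fixed bounded interval $[a,b]$,
\[
R=-\frac{1}{2\pi i}\sum_{m\neq0}\frac1m\int_a^b g'(x)\ex{g(x)+mx}\dif x.
\]
For fixed $m\neq0$ set $h_m(x):=g(x)+mx$, so $h_m'=g'+m$ and $h_m''=g''\ge0$. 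Here the bound $0\le g'\le\tfrac12$ enters: it forces $h_m'$ to have constant sign on $[a,b]$ with $|h_m'|\ge\tfrac12$, and in fact $|h_m'|\gtrsim|m|$ once $|m|\ge2$. Writing $\ex{h_m}=\frac{1}{2\pi i\,h_m'}\frac{\dif}{\dif x}\ex{h_m}$ and integrating by parts once, the boundary contribution is $\lesssim|g'|/|h_m'|\lesssim1/|m|$, whereas the remaining integral carries the amplitude derivative $\frac{\dif}{\dif x}\frac{g'}{h_m'}=\frac{m\,g''}{(h_m')^2}$; now $g''\ge0$ is what matters, since it yields $\int_a^b g''=g'(b)-g'(a)\le\tfrac12$, so this term too is $\lesssim\frac{|m|}{|m|^2}\cdot\tfrac12\lesssim1/|m|$. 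Hence $\big|\int_a^b g'(x)\ex{h_m(x)}\dif x\big|\lesssim1/|m|$ for every $m\neq0$ (the finitely many small $|m|$ contributing merely $\mathcal{O}(1)$), and therefore $|R|\lesssim\sum_{m\neq0}\frac{1}{|m|}\cdot\frac{1}{|m|}=\sum_{m\neq0}m^{-2}=\mathcal{O}(1)$, as desired.

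The main obstacle is precisely this last step: the naive estimate $|R|\le\tfrac12\int_a^b|g'|\le\tfrac14(B-A)$ grows with the length of the interval, and a bound independent of $B-A$ can only be recovered by using both hypotheses essentially --- $0\le g'\le\tfrac12$ to keep every perturbed phase $g'+m$ with $m\neq0$ bounded away from zero, and $g''\ge0$ to bound the total variation of $g'$, equivalently $\int_a^b|g''|$, by the constant $\tfrac12$. The remaining issues are merely technical: the regularity of $g''$ (one may assume it integrable, as in \cite[Lemma 9.1]{Dav}, or avoid the issue via monotonicity of $g'$ and Riemann--Stieltjes integration) and the termwise handling of the Fourier series, both routine.
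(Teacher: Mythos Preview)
The paper does not give its own proof of this lemma; it simply quotes it from Davenport \cite[Lemma 9.1]{Dav} and uses it as a black box in the proof of Lemma~\ref{lem:M1}. So there is no ``paper's proof'' to compare against.

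Your argument is correct. The Euler--Maclaurin identity you write down is exactly right, and the key step --- inserting the Fourier expansion of the sawtooth and integrating each term $\int_a^b g'(x)\ex{g(x)+mx}\,\mathrm{d}x$ by parts once --- is carried out cleanly. The computation $\frac{\mathrm d}{\mathrm dx}\frac{g'}{h_m'}=\frac{m\,g''}{(h_m')^2}$ is correct, and the two hypotheses enter precisely where you say: $0\le g'\le\tfrac12$ forces $|h_m'|\gtrsim|m|$ for every nonzero $m$, while $g''\ge0$ gives $\int_a^b g''=g'(b)-g'(a)\le\tfrac12$. This yields the $1/|m|$ bound on each Fourier mode and hence the convergent $\sum_{m\neq0}m^{-2}$. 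Your remarks on the two technical points (pointwise convergence of the sawtooth series only off the integers, handled by dominated convergence on a bounded interval; and the integrability of $g''$, which follows from monotonicity of $g'$ or can be sidestepped via Riemann--Stieltjes integration against $\mathrm dg'$) are accurate and dispose of the only genuine subtleties.

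For what it is worth, Davenport's own argument is organized slightly differently (he works more directly with the sawtooth rather than naming Euler--Maclaurin, and phrases the oscillatory estimate via a van der Corput--type lemma), but the underlying mechanism --- one integration by parts against a nonvanishing perturbed phase, with total variation of $g'$ controlled by $\int g''\le\tfrac12$ --- is the same as yours.
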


\begin{lem}\label{lem:M1}
For each $k\in \NN$ there exists a constant $C_{k}>0$ such that, for $d\in\NN$ and $N\ge C_{k} d^{d/\delta}$, with $0<\delta<1-4\alpha$, we have   
	\begin{equation}
		\label{eq:pro:marc}
		M_k(N)-\mathcal{A}^1(N)=\mathfrak{S}(N)\frac{\Gamma\left(1+\frac{1}{k}\right)^d}{\Gamma\left(\frac{d}{k}\right)}N^{d/k-1} \cdot o(1),
	\end{equation}
where the implicit constants in the symbol $o(1)$ are independent of the dimension $d.$
\end{lem}
\begin{proof}
Fix a fraction $a/q$ with $1\leq q\leq X_N^\alpha$ and $a\leq q$ such that $(a,q)=1$. Consider the corresponding major arc $\mathfrak{M}_{\X_N}(a/q)$. Let us collect together those values of $n$ in the sum defining $f_{X_N}(\xi)$ which are in the same residue class and write
\begin{equation}
\label{eq:fXdecom}
f_{X_N}(\xi)=\sum_{r=1}^q\ex{a r^k/q}\sum_{-\frac{r}{q}<n\le \frac{\X_N-r}{q}}\ex{(\xi-a/q)(qn+r)^k}.
\end{equation}
For $x\in(-q^{-1}r,q^{-1}(\X_N-r)]$ we denote 
\[
g(x):=(\xi-a/q)(qx+r)^k.
\]
If $\xi-a/q\geq0$ then for $N\geq(2k)^{2k}>(2k)^{k/(1-2\alpha)}$ one has
\[
0\leq g'(x)=k(\xi-a/q)q(qx+r)^{k-1}\leq1/2
\]
since $|\xi-a/q|\leq X_N^{-k+\alpha}$ and $qx+r\leq X_N$. Moreover we have $g''(x)>0$. Hence, we may apply Lemma~\ref{vander} to the inner sum in \eqref{eq:fXdecom}. In the case when $\xi-a/q<0$ we use Lemma~\ref{vander} to the complex conjugate sum of the inner sum. Consequently, by noting that $q\le X_N^{\alpha}$ we conclude that
\begin{equation*}
    f_{X_N}(\xi)=\sum_{r=1}^q\ex{a r^k/q}\int_{-r/q}^{(\X_N-r)/q} \ex{ (\xi-a/q) (qx+r)^k}{\rm d}x+\mathcal{O}(X_N^\alpha),\quad \xi\in\mathfrak{M}_{\X_N}(a/q).
\end{equation*}
If we change variable $qx+r=y$ in the above integral we obtain 
\begin{equation*}
    f_{X_N}(\xi)=G(a/q)v(\xi-a/q)+\mathcal{O}(X_N^\alpha),\quad \xi\in\mathfrak{M}_{\X_N}(a/q).
\end{equation*}
Now, by using the identity $x^d-y^d=(x-y)(x^{d-1}+x^{d-2}y+\cdots+xy^{d-2}+y^{d-1})$ and the obvious bounds $|f_{\X_N}(\xi)|\le X_N$ and $|G(a/q)\cdot v(\xi-\frac
aq)|\le X_N$ we get
    \begin{equation}\label{eq:lem:M1:1}
    \Big|\big(f_{\X}(\xi)\big)^d -\big(G(a/q)\cdot v(\xi-a/q)\big)^d \Big|\lesssim d \cdot X_N^{\alpha+d-1},\qquad \xi\in \mathfrak{M}_{\X_N}(a/q).
\end{equation}

Using \eqref{eq:lem:M1:1} and recalling definitions \eqref{asymp2} and \eqref{eq:M1def} we estimate
\begin{align*}
	|M_k(N)-\mathcal{A}^1(N)| \lesssim d\, X_N^{\alpha+d-1} \cdot \sum_{1\le q\le \X_N^{\alpha}}\sum_{\genfrac{}{}{0pt}{}{a=1}{(a, q)=1}}^q \left| \mathfrak{M}_{X_N}(a/q) \right|\lesssim d\, X_N^{d-k+4\alpha-1}.
\end{align*}
Hence we see that
\begin{equation}\label{eq:lem:M1:2}
N^{-d/k+1}	|M_k(N)-\mathcal{A}^1(N)|\lesssim d\cdot X_N^{4\alpha-1},
\end{equation}
and using \eqref{eq:mterm1} we get
\begin{align*}
	\mathfrak{S}(N)^{-1}\frac{\Gamma\left(\frac{d}{k}\right)}{\Gamma\left(1+\frac{1}{k}\right)^d} N^{-d/k+1}	|M_k(N)-\mathcal{A}^1(N)|\lesssim_k^d d^{d/k}\cdot X_N^{4\alpha-1}.
\end{align*}
Therefore, taking $0<\delta<1-4\alpha$ we obtain for $X_N\ge d^{d/(k\delta)}$ the estimate
\[
	\mathfrak{S}(N)^{-1}\frac{\Gamma\left(\frac{d}{k}\right)}{\Gamma\left(1+\frac{1}{k}\right)^d} N^{-d/k+1}	|M_k(N)-\mathcal{A}^1(N)|\lesssim_k^d X_N^{4\alpha-1+\delta}.
\]
The above justifies \eqref{eq:pro:marc} and completes the proof of Lemma \ref{lem:M1}.
\end{proof}

\begin{rem}
	\label{rem:problem}
	The threshold $N \ge C_{k} d^{d/\delta}$ appears in Lemma \ref{lem:M1} because of the method in which we estimate the difference between the exponential sum and the exponential integral. The proof is based on the simple one-dimensional estimate \eqref{eq:lem:M1:1} which translates into the decay of order $d X_N^{4\alpha-1}$ in \eqref{eq:lem:M1:2}. The same issue appears if we consider $(f_{X_N}(\xi))^d$ as a sum over $\NN^d$ and use mean value theorem to estimate the difference between the sum and the integral. In order to obtain a polynomial threshold of the form $N> C_k d^{ck^j},$ for some  $j\in \NN$ we would need a decay in \eqref{eq:lem:M1:2} of an order $X_N^{-d\gamma},$ where $\gamma$ is a non-negative constant. This issue seems to persist whenever the  method employed to solve the Waring problem uses a one-dimensional error term bound or the mean value theorem. It is worth mentioning that this is an approach commonly used in the literature. For instance the threshold $N \ge C_k d^{d/\delta}$ also appears if one applies methods developed by Magyar \cite{Mag1}. This is because of \cite[Lemma 4]{Mag1} which  uses a mean value theorem at the beginning of p.\ 929. Similarly,  the higher order asymptotics formula in Waring's problem proved recently by Vaughan and Wooley \cite{VaWoo} makes use of a one-dimensional error term bound in \cite[eq.\ (3.3), (3.4)]{VaWoo}. This then translates to a requirement that $N\ge C_k d^{d/\delta}$ in order to have a dimension-free control on the error bound in \cite[Lemma 3.1]{VaWoo}. On the other hand in the proof of \cite[Theorem 3.1, eq. 3.17]{miszwr1} the authors do not use such a reduction to a one-dimensional estimate. They were able to resort explicitly to the Poisson summation formula in the case $k=2.$  This leads to a much lower threshold for $k=2$. Whether this is possible for general $k$ is not clear to us. 
\end{rem}

The second approximation of $M_k(N)$ is based on a replacement of the range of integration in $\mathcal{A}^1.$ We let 
\begin{equation}
	\label{eq:M2def}
	\mathcal{A}^2(N):=\sum_{1\le q\le \X_N^{\alpha}}\sum_{\genfrac{}{}{0pt}{}{a=1}{(a, q)=1}}^q G(a/q)^d\int_{-\infty}^\infty\big(v(\xi-a/q)\big)^d \ex{-\xi N }{\rm d}\xi.
\end{equation}
For proving Lemma~\ref{lem:M2} we make use of an estimate for the function $v$ defined in \eqref{eq:vdef} and justified in \cite{Woo1} (see also \cite[p. 21]{Dav}). 
\begin{lem}[{\cite[Lemma 6.1]{Woo1}}]\label{lem:vlem} 
For each $k\in\NN$ and for any $\xi\in\R$ we have
\begin{equation}\label{eq:veq}
	|v(\xi)|\lesssim_k X_N(1+X_N^{k}|\xi|)^{-1/k}.
\end{equation}
\end{lem}
Now we can state our second approximation of $M_k(N)$.
\begin{lem}\label{lem:M2}
For each $k\in \NN$ there exists a constant $C_{k,\alpha}>0$ such that, for $d\ge 2^{k+1}$ and $N\ge C_{k,\alpha} d^{d/\delta}$, with $0<\delta<\min\{1-4\alpha,\alpha\}$, we have 
\begin{equation}\label{eq:pro:marc2}
	M_k(N)-\mathcal{A}^2(N)=\mathfrak{S}(N)\frac{\Gamma\left(1+\frac{1}{k}\right)^d}{\Gamma\left(\frac{d}{k}\right)}N^{d/k-1} \cdot o(1),
\end{equation}
where the implicit constants in the symbol $o(1)$ are independent of the dimension $d.$
\end{lem}
\begin{proof}
By Lemma~\ref{lem:M1} it is enough to show that
\begin{equation}\label{eq:pro:m1m2}
	\mathcal{A}^1(N)-\mathcal{A}^2(N)=\mathfrak{S}(N)\frac{\Gamma\left(1+\frac{1}{k}\right)^d}{\Gamma\left(\frac{d}{k}\right)}N^{d/k-1} \cdot o(1),\qquad \textrm{for}\quad N\ge C_{k,\alpha} d^{k/\alpha}
\end{equation}
for a large enough universal constant $C_{k,\alpha}>0.$
Now, from Lemma~\ref{lem:vlem} and the definition of $\mathfrak{M}_{X_N}(a/q)$  it follows that
\begin{align*}
	\Big|\int_{\R\setminus \mathfrak{M}_{X_N}(a/q)}\left(v(\xi-a/q)\right)^d \ex{-\xi N }{\rm d}\xi\Big|& \lesssim^d \int_{|\xi|> X_N^{-k+\alpha}} X_N^d(1+X_N^{k}|\xi|)^{-d/k}\,{\rm d}\xi \lesssim^d \int_{|\xi|> X_N^{-k+\alpha}} |\xi|^{-d/k}\,{\rm d}\xi\\
& \lesssim^d (X_N^{-k+\alpha})^{-d/k+1}=X_N^{d-k-(d/k-1)\alpha}.
\end{align*}
Consequently, summing over the pairs $(a,q)$ and using Lemma~\ref{lem:egau} we reach
\begin{align*}
	|\mathcal{A}^1(N)-\mathcal{A}^2(N)|&\leq C_k^d  X_N^{d-k-(d/k-1)\alpha}\sum_{1\le q\le \X_N^{\alpha}}\sum_{\genfrac{}{}{0pt}{}{a=1}{(a, q)=1}}^q q^{-d/k}\leq  C_k^d X_N^{d-k-(d/k-1)\alpha} X_N^{\alpha(2-d/k)}.
\end{align*}
We use \eqref{eq:mterm1} to get
\begin{align*}
	\mathfrak{S}(N)^{-1}\frac{\Gamma\left(\frac{d}{k}\right)}{\Gamma\left(1+\frac{1}{k}\right)^d} N^{-d/k+1}	|\mathcal{A}^1(N)-\mathcal{A}^2(N)| &\leq  C_k^d d^{d/k} X_N^{-(d/k-1)\alpha+\alpha(2-d/k)}\lesssim_k^d (dX_N^{-\alpha})^{d/k}X_N^{\alpha(3-d/k)}.
\end{align*}
Now, taking $N\ge C_{k,\alpha} d^{k/\alpha}$ for a large enough $C_{k,\alpha}>0$ we obtain 
\begin{align*}
	\mathfrak{S}(N)^{-1}\frac{\Gamma\left(\frac{d}{k}\right)}{\Gamma\left(1+\frac{1}{k}\right)^d} N^{-d/k+1}	|\mathcal{A}^1(N)-\mathcal{A}^2(N)|=o(1).
\end{align*}
since $(3-d/k)<-1$ for $k\ge2$ and $d\ge 2^{k+1}$. This completes the proof of Lemma \ref{lem:M2}.
\end{proof}

Change of variable shows that the definition \eqref{eq:M2def} may be rewritten as
\begin{equation*}
	\mathcal{A}^2(N)=\sum_{1\le q\le \X_N^{\alpha}}\sum_{\genfrac{}{}{0pt}{}{a=1}{(a, q)=1}}^q G(a/q)^d\ex{-Na/q}\int_\R\left(v(\xi)\right)^d \ex{-\xi N }{\rm d}\xi.
\end{equation*}
It is well known that, for $d\ge k+1$ we have 
\[
\int_{\R}\left(v(\xi)\right)^d \ex{-\xi N }{\rm d}\xi=\frac{\Gamma\left(1+\frac{1}{k}\right)^d}{\Gamma\left(\frac{d}{k}\right)}N^{d/k-1},
\]
see e.g.\ \cite[Theorem 4.1]{Dav} or \cite[Lemma 6.3]{Woo1}. Thus, we obtain
\begin{equation}
	\label{eq:M2def'}
	\mathcal{A}^2(N)=\sum_{1\le q\le \X_N^{\alpha}}\sum_{\genfrac{}{}{0pt}{}{a=1}{(a, q)=1}}^q G(a/q)^d\ex{-Na/q}\cdot\frac{\Gamma\left(1+\frac{1}{k}\right)^d}{\Gamma\left(\frac{d}{k}\right)} N^{d/k-1}.
\end{equation}
The last step in proving Proposition~\ref{pro:Marc} is the replacement of the truncated singular series 
\[
\sum_{1\le q\le \X_N^{\alpha}}\sum_{\genfrac{}{}{0pt}{}{a=1}{(a, q)=1}}^q G(a/q)^d\ex{-Na/q}
\]
by the singular series \eqref{eq:sise}. Let $\mathcal{A}^3(N)$ by our final approximation of $M_k(N)$ given by
    \begin{equation}\label{eq:M3def}
		\mathcal{A}^3(N):=\mathfrak{S}(N)\frac{\Gamma\left(1+\frac{1}{k}\right)^d}{\Gamma\left(\frac{d}{k}\right)} N^{d/k-1}.
    \end{equation}

\begin{lem}\label{lem:M3}
For each $k\in \NN$ there exists a constant $C_{k,\alpha}>0$ such that, for $d\ge 2^{k+1}$ and $N\ge C_{k,\alpha} d^{d/\delta}$, with $0<\delta<\min\{1-4\alpha,\alpha\}$, one has
	\begin{equation}\label{eq:pro:marc3}
		M_k(N)-\mathcal{A}^3(N)=\mathfrak{S}(N)\frac{\Gamma\left(1+\frac{1}{k}\right)^d}{\Gamma\left(\frac{d}{k}\right)}N^{d/k-1} \cdot o(1),
	\end{equation}
where the implicit constants in the symbol $o(1)$ are independent of the dimension $d$.
\end{lem}

\begin{proof}
By Lemma~\ref{lem:M2} it suffices to justify that
	\begin{equation}\label{eq:pro:m2m3}
		\mathcal{A}^3(N)-\mathcal{A}^2(N)=\mathfrak{S}(N)\frac{\Gamma\left(1+\frac{1}{k}\right)^d}{\Gamma\left(\frac{d}{k}\right)}N^{d/k-1} \cdot o(1),\qquad \textrm{for}\quad N\ge C_{k,\alpha} d^{d/\delta}
	\end{equation}
	with some large universal constant $C_{k,\alpha}>0.$
Clearly, by Proposition~\ref{prop:singest} we have
    \begin{align*}
	    |\mathcal{A}^3(N)-\mathcal{A}^2(N)|\leq a_k^{-d} \Big| \sum_{q> \X_N^{\alpha}}\sum_{\genfrac{}{}{0pt}{}{a=1}{(a, q)=1}}^q  G(a/q)^d\ex{-Na/q}\Big|\cdot \mathfrak{S}(N)\frac{\Gamma\left(1+\frac{1}{k}\right)^d}{\Gamma\left(\frac{d}{k}\right)}N^{d/k-1},
    \end{align*}
and as a consequence \eqref{eq:pro:m2m3} will follow if we show that
    \begin{equation}\label{eq:sita}
	    a_k^{-d}\sum_{q> \X_N^{\alpha}}\sum_{\genfrac{}{}{0pt}{}{a=1}{(a, q)=1}}^q  G(a/q)^d\ex{-Na/q}=o(1).
    \end{equation}
By the estimate \eqref{eq:indestimd} we obtain
    \begin{align*}
	    a_k^{-d}\Big|\sum_{q> \X_N^{\alpha}}\sum_{\genfrac{}{}{0pt}{}{a=1}{(a, q)=1}}^q  G(a/q)^d\ex{-Na/q}\Big|\lesssim_k a_k^{-d} \sum_{q> X_N^{\alpha}}q^{-3/2}\leq C_k^dX_N^{-\alpha/10},
    \end{align*} 
for some $C_k>0$. Now if we take $N\ge C_{k,\alpha} d^{d/\delta}$,  for a large $C_{k,\alpha}>0$, we see that the estimate \eqref{eq:sita} holds.
\end{proof}
Now we are ready to give the proof of Proposition~\ref{pro:Marc} and Theorem~\ref{thm:main}

\begin{proof}[Proof of Proposition~\ref{pro:Marc} and Theorem~\ref{thm:main}]
We see that the Proposition \ref{pro:Marc} follows by Lemma~\ref{lem:M3}.

In order to prove Theorem~\ref{thm:main} we use Propositions~\ref{pro:marc} and \ref{pro:Marc} with $\alpha=1/5$ and $\delta=1/20$ to obtain that for $d\geq 2^{k+1}$ and $N\gtrsim_k d^{20d}$ we have 
\begin{equation*}
	r_k(N)=\mathfrak{S}(N)\frac{\Gamma\left(1+\frac{1}{k}\right)^d}{\Gamma\left(\frac{d}{k}\right)}N^{d/k-1}(1 + o(1)).
\end{equation*}
\end{proof}
\subsection{Comments and questions}
Let us state some comments and questions concerning our results.
\begin{enumerate}[label*={\arabic*}.]
\item By choosing an appropriate small $\alpha\in(0,1/4)$ we could achieve in Lemma~\ref{lem:M1} a better threshold. Namely, we would get that it is enough to take $N\geq C_k d^{(1+\delta)d}$ for some $C_k>0$ and small $\delta>0$. This would imply that for a large $k\in\NN$ and $d\geq 2^{k+1}$ it is enough to take $N\geq C_k d^{(1+\delta)d}$ in Theorem~\ref{thm:main}. It is still a growth of order $d^d\,$  instead of the expected by us polynomial growth $d^{k^j}$, for some $j\in\NN$. It seems that our approximation method in Lemma~\ref{lem:M1} is not good enough to achieve a lower order of the threshold for $N$. In the case of $k=2$ we know that the formula \eqref{eq:thm:main} holds for $d\ge 16$ and $N\gtrsim d^3$. Is that the optimal threshold for $N$? What about the other values of $k$? 
\item Denote by $G_{\rm multi}(k)$ the smallest integer such that for $d\geq G_{\rm multi}(k)$ the multiplicative formula \eqref{eq:thm:main} holds. What is the optimal value of $G_{\rm multi}(k)$? Our theorem provides the bound $G_{\rm multi}(k)\le 2^{k+1}$ with the threshold $N\gtrsim_k d^{20d}$. By a careful analysis of our proof one can note that for large $k\in\NN$ it is enough to take $G_{\rm multi}(k)\le k(k+1)$ for $N\gtrsim_k d^{d^2}$. Is that trade-off between $G_{\rm multi}(k)$ and the threshold for $N$ necessary? In the case of the usual additive formula \eqref{eq:sise} it is known, see \cite[eq. (6.16)]{b1}, that for large $k\in\NN$ it holds for $d\geq G_{\rm add}(k)$ with
\begin{equation*}
    G_{\rm add}(k)\le k^2-k+\mathcal{O}(k).
\end{equation*}
What is the relation between $G_{\rm multi}(k)$ and $G_{\rm add}(k)$? It is natural to expect that $G_{\rm add}(k)\leq G_{\rm multi}(k)$ since we need some place to get rid of the dependence on the dimension. Does the equality $G_{\rm add}(k)=G_{\rm multi}(k)$ hold?
\item  Let us consider the generalized Waring problem
\begin{equation}\label{eq:genwar}
    c_1n_1^k+c_2n_2^k+\ldots+c_dn_d^k=N,
\end{equation}
where $c_1,\ldots,c_s$ are given positive integers and $n_1,\ldots, n_d$ are arbitrary positive integers. We assume that $c_1,\ldots,c_s$ do not all have a common factor greater than $1$. Additionally, in order to ensure solvability, we assume that the congruence
\begin{equation*}
    c_1n_1^k+c_2n_2^k+\ldots+c_dn_d^k\equiv N\quad (\rm{mod}\,\, p^h)
\end{equation*}
has a solution for every $p\in\mathbb{P}$ and large $h\in\NN$. For more details see \cite[Chapter 7]{Dav} or \cite[Chapter 11 and 12]{Woo1}. Denote by $c=(c_1,c_2,\ldots, c_d)$ and $\boldsymbol{c}=\max_{1\leq i\leq d}c_i$. The singular series for the generalized Waring problem \eqref{eq:genwar} is given by
\begin{equation*}
    \mathfrak{S}_c(N):=\sum_{q=1}^\infty\sum_{\genfrac{}{}{0pt}{}{a=1}{(a, q)=1}}^q\prod_{i=1}^d G(c_ia/q)\ex{-Na/q} .
\end{equation*}
Let $r_{k}^c(N)$ denote the number of solutions of the equation \eqref{eq:genwar}. By following the presented approach and by using some facts from \cite[Chapter 11 and 12]{Woo1} one can prove the following.
\begin{thm}\label{thm:main2}
For each $k\in \NN$ with $k\ge 2$ there exists a constant $C_{\boldsymbol{c},k}>0$ such that, for $d\ge 2^{k+1}$ and $N\ge C_{\boldsymbol{c},k} d^{20d}$, we have   
\begin{equation}\label{eq:thm:main2}
    r_k^{c}(N)=\frac{\mathfrak{S}_c(N)}{(c_1 c_2\cdots c_d)^{1/k}}\frac{\Gamma\left(1+\frac{1}{k}\right)^d}{\Gamma\left(\frac{d}{k}\right)}N^{d/k-1}(1 + o(1)),
\end{equation}
where the implicit constants in the symbol $o(1)$ depend only on $\boldsymbol{c}$ and $k$. 
\end{thm}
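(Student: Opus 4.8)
The plan is to rerun the Hardy--Littlewood circle method of Section~3 essentially verbatim, carrying the coefficients $c_1,\ldots,c_d$ through every estimate; since $\boldsymbol{c}=\max_i c_i$ is fixed, the only change is that all implicit constants acquire a dependence on $\boldsymbol{c}$ (never on $d$). For $1\le i\le d$ put $X_i:=\floor{(N/c_i)^{1/k}}$ and $f^{(i)}(\xi):=\sum_{n=1}^{X_i}\ex{c_i\xi n^k}=f_{X_i}(c_i\xi)$, so that, exactly as in \eqref{asymp1},
\[
r_k^c(N)=\int_0^1\Big(\prod_{i=1}^d f^{(i)}(\xi)\Big)\ex{-\xi N}\,{\rm d}\xi .
\]
I would keep the major/minor arc decomposition $[0,1]=\mathfrak{M}_X\cup\mathfrak{m}_X$ with the same parameter $\alpha$ and $X=\floor{N^{1/k}}$, and split $r_k^c(N)=M_k^c(N)+m_k^c(N)$ accordingly.

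\emph{Minor arcs.} Given $\xi\in\mathfrak{m}_X$, I would apply Dirichlet's principle with a modulus bound equal to a suitable $\boldsymbol{c}$-dependent multiple of $X^{k-\alpha}$ to produce, for a fixed index $i$, a reduced fraction $a'/q'$ approximating $c_i\xi$ with $X^\alpha/\boldsymbol{c}<q'\le X^{k-\alpha}$ and $\lvert c_i\xi-a'/q'\rvert\le 1/(q')^2$; here one uses $q'\ge q/(q,c_i)\ge q/\boldsymbol{c}$ together with the fact that a denominator $q\le X^\alpha$ would force $\xi\in\mathfrak{M}_X$. Lemma~\ref{weyline} applied to $f_{X_i}(c_i\xi)$ then gives $\lvert f^{(i)}(\xi)\rvert\lesssim_{\boldsymbol{c},\e}X^{1+\e-\alpha\sigma(k)}$, the analogue of Corollary~\ref{lem:fXmin}. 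Bounding the remaining $d-1$ factors trivially by $X$ and estimating $\int_0^1\lvert f^{(i)}(\xi)\rvert^{k(k+1)}\,{\rm d}\xi$ by Lemma~\ref{lem:hua} applied to $f_{X_i}$ (periodicity absorbs the dilation by $c_i$), the proof of Proposition~\ref{pro:marc} transfers and yields $m_k^c(N)=\dfrac{\mathfrak{S}_c(N)}{(c_1\cdots c_d)^{1/k}}\dfrac{\Gamma(1+1/k)^d}{\Gamma(d/k)}N^{d/k-1}\cdot o(1)$ already for $N\ge C_{\boldsymbol{c},k}d^{5k^2/\alpha}$.

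\emph{Major arcs.} I would imitate the three successive approximations $\mathcal{A}^1,\mathcal{A}^2,\mathcal{A}^3$, letting $v_i(\theta):=\int_0^{X_i}\ex{\theta z^k}\,{\rm d}z$ and defining $\mathcal{A}^1_c,\mathcal{A}^2_c$ by the analogues of \eqref{eq:M1def}, \eqref{eq:M2def} with $f_X$, $v(\xi-a/q)$, $G(a/q)$ replaced by $f^{(i)}$, $v_i(c_i(\xi-a/q))$, $G(c_ia/q)$, and with $\mathcal{A}^3_c$ equal to the right-hand side of \eqref{eq:thm:main2} without the factor $1+o(1)$. The steps would then be: (i) the analogue of Lemma~\ref{lem:M1}, obtained by applying van der Corput's Lemma~\ref{vander} to the inner sum of each $f^{(i)}$ after splitting into residue classes mod $q$ --- the hypothesis $0\le g'\le 1/2$ holding once $N\ge(2k\boldsymbol{c})^{k/(1-\alpha)}$ --- and telescoping the $d$ one-dimensional errors $\mathcal{O}_{\boldsymbol{c}}(X^\alpha)$, which gives $\lvert M_k^c(N)-\mathcal{A}^1_c(N)\rvert\lesssim_{\boldsymbol{c}}d\,X^{d-k+3\alpha-1}$ and hence the threshold $N\ge C_{\boldsymbol{c},k}d^{d/\delta}$ with $0<\delta<1-3\alpha$; (ii) the analogue of Lemma~\ref{lem:M2}, obtained by applying Lemma~\ref{lem:vlem} to each $v_i$ (with a $\boldsymbol{c}$-dependent constant) to extend the integrals to $\R$, together with Corollary~\ref{lem:egau} in the form $\lvert G(c_ia/q)\rvert\lesssim_{\boldsymbol{c},\e}q^{\e-\sigma(k)}$; (iii) the substitution $\xi\mapsto\xi+a/q$ followed by $w=c_i^{1/k}z$ in each $v_i$, which turns $v_i(c_i\theta)$ into $c_i^{-1/k}v(\theta)$ and so produces the normalization $(c_1\cdots c_d)^{-1/k}$, combined with the value of $\int_\R v(\xi)^d\ex{-\xi N}\,{\rm d}\xi$ quoted before \eqref{eq:M2def'}; this identifies $\mathcal{A}^2_c(N)$ with $\dfrac{1}{(c_1\cdots c_d)^{1/k}}\dfrac{\Gamma(1+1/k)^d}{\Gamma(d/k)}N^{d/k-1}$ times the truncated singular sum $\sum_{q\le X^\alpha}\sum_{(a,q)=1}\prod_i G(c_ia/q)\ex{-Na/q}$; (iv) the analogue of Lemma~\ref{lem:M3}, completing this sum to $\mathfrak{S}_c(N)$ --- since $\lvert G(c_ia/q)\rvert\le1$ and $\lvert G(c_ia/q)\rvert\lesssim_{\boldsymbol{c},\e}q^{\e-\sigma(k)}$, the inner sum over $a$ is $\lesssim_{\boldsymbol{c},k}q^{-11/10}$ for $d\ge2^k+1$, exactly as in the proof of Proposition~\ref{prop:singest} --- while the remaining properties of $\mathfrak{S}_c(N)$ (absolute convergence, the Euler product, and a two-sided bound $a_{\boldsymbol{c},k}^d\le\mathfrak{S}_c(N)\le A_{\boldsymbol{c},k}$ for $d\ge2^k+1$) are supplied by \cite[Chapters 11 and 12]{Woo1}. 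Choosing $\alpha=1/4$, $\delta=1/20$ and adding the minor and major contributions would then yield \eqref{eq:thm:main2} for $d\ge2^{k+1}$ and $N\ge C_{\boldsymbol{c},k}d^{20d}$.

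\emph{Main obstacle.} As in Theorem~\ref{thm:main}, every step other than the major arc part costs only a polynomial-in-$d$ threshold, and the bottleneck is step (i): replacing the exponential sum $\prod_i f^{(i)}$ by the exponential integral $\prod_i v_i$ rests on a one-dimensional error bound for each factor and thus yields only the decay $d\,X^{3\alpha-1}$, rather than the $X^{-d\gamma}$ that a polynomial threshold $N\ge C_{\boldsymbol{c},k}d^{ck^j}$ would require --- precisely the difficulty of Remark~\ref{rem:problem}. The only genuinely new issue compared with Theorem~\ref{thm:main} is uniformity in the coefficients, and it is handled by the two elementary facts $q/(q,c_i)\ge q/\boldsymbol{c}$ (keeping the Weyl and Gaussian-sum bounds under control after reducing $c_ia/q$ to lowest terms) and $v_i(c_i\theta)=c_i^{-1/k}v(\theta)$ (producing the announced normalizing factor).
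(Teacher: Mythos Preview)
Your outline is exactly what the paper has in mind: the paper gives no proof of Theorem~\ref{thm:main2} beyond the sentence ``By following the presented approach and by using some facts from \cite[Chapter 11 and 12]{Woo1} one can prove the following'', and you have correctly fleshed this out. The structure --- minor arcs via Weyl plus Hua, major arcs via the three approximations $\mathcal{A}^1_c,\mathcal{A}^2_c,\mathcal{A}^3_c$, the bottleneck at step~(i), and the use of $q/(q,c_i)\ge q/\boldsymbol{c}$ to control denominators after reducing $c_ia/q$ to lowest terms --- is the intended argument, and the singular-series input from Wooley's notes is invoked at the right place.

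Two points of your write-up need correction, though neither changes the strategy or the threshold. First, in your minor-arc paragraph the phrase ``bounding the remaining $d-1$ factors trivially by $X$'' does not work: the trivial bound $\prod_{j\ne i}|f^{(j)}|\le X^{d-1}$ combined with a single Weyl-saved factor yields only $X^{d+\e-\alpha\sigma(k)}$, which is far larger than the target $X^{d-k}$. What actually transfers from the proof of Proposition~\ref{pro:marc} is to apply the Weyl bound to $d-k(k+1)$ of the factors (your Dirichlet argument for $c_i\xi$ works for \emph{every} index $i$, not just one) and then control $\int_0^1\prod_{i\le k(k+1)}|f^{(i)}|\,{\rm d}\xi$ via H\"older's inequality together with Lemma~\ref{lem:hua} applied to each $f^{(i)}$. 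Second, the identity $v_i(c_i\theta)=c_i^{-1/k}v(\theta)$ is not literally true: the substitution $w=c_i^{1/k}z$ sends the upper limit $X_i$ to $c_i^{1/k}X_i$, which differs from $X=\lfloor N^{1/k}\rfloor$ by $O_{\boldsymbol{c}}(1)$. This extra $O_{\boldsymbol{c}}(1)$ error per factor must be absorbed alongside the van~der~Corput error in step~(i); equivalently, one may take $(N/c_i)^{1/k}$ as the upper limit in $v_i$ from the start so that the substitution is exact and the evaluation of $\int_\R(\prod_i v_i(c_i\xi))\ex{-\xi N}\,{\rm d}\xi$ via \cite[Theorem~4.1]{Dav} is clean.
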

\end{enumerate}

\end{document}